\newtheorem{thm}{Theorem}[section]
\newtheorem{pro}[thm]{Proposition}
\newtheorem{cor}[thm]{Corollary}
\newtheorem{dfn}[thm]{Definition}
\newtheorem{result}[thm]{Result}
\theoremstyle{definition}
\newtheorem{eg}[thm]{Example}
\newcommand{\bea}{\begin{eqnarray*}}
\newcommand{\eea}{\end{eqnarray*}}
\newcommand{\ben}{\begin{enumerate}}
\newcommand{\een}{\end{enumerate}}
\newcommand{\bi}{\begin{itemize}}
\newcommand{\ei}{\end{itemize}}
\newcommand{\mc}{\mathcal}
\newcommand{\D}{\mathcal D}
\newcommand{\Pcon}{\mathcal P}
\newcommand{\Q}{\mathcal Q}
\newcommand{\C}{\mathcal C}
\newcommand{\I}{\mathcal I}
\DeclareMathOperator{\Dom}{Dom}
\DeclareMathOperator{\Img}{Im}
\DeclareMathOperator{\Reg}{Reg}
\begin{document}

\title{D-inverse constellations}
\author{Victoria Gould and Tim Stokes}

\date{}

\begin{abstract} Constellations are partial algebras in the sense that they possess a partial product, and a unary operation modelling domain.  They were first used to give an ESN-style theorem for left restriction semigroups in terms of so-called inductive constellations.  Here, we consider constellations in which elements have a suitable notion of inverse, giving the notion of a D-inverse constellation.  We show that there is a categorical isomorphism between the category of ordered groupoids and the category of D-inverse constellations.  This may be viewed as a generalisation of the ESN theorem, which relates the category of inductive groupoids to the category of inverse semigroups.
\end{abstract}

\maketitle

\section{Introduction}\label{sec:intro}  

The Ehresmann-Schein-Nambooripad (ESN) Theorem establishes a categorical equivalence, indeed an isomorphism, between the category of inverse semigroups and the category of inductive groupoids;  we refer the reader to \cite{lawsoninverse} for an exposition of this result and its history. One can think of the ESN Theorem as replacing an algebraic structure by an ordered one, which in this case is a small category 
equipped with an order that allows one to reconstruct the `lost' algebraic information. 
Following the establishment of the ESN theorem, Lawson provided a correspondence along similar lines in \cite{lawson1} that connected so-called Ehresmann semigroups (bi-unary semigroups with both and domain and range-like operations) to certain types of ordered categories.  Two-sided restriction semigroups are Ehresmann semigroups, and are covered as a special case in \cite{lawson1}.  However, one-sided restriction semigroups are perhaps even more natural than two-sided ones since they provide the algebraic models for partial functions, so there was interest in obtaining an analogous result for them.  The problem was the lack of a range operation, and more generally, the lack of sufficient left-right dual properties. 

Constellations are ``one-sided" versions of categories, in which there is a notion of domain but in general no notion of range.  They were first defined by Gould and Hollings in \cite{constell}, where the purpose was to obtain a variant of the ESN  Theorem for  left restriction semigroups, using so-called inductive constellations, where order again plays a major role. Constellations have since been studied further for their own sake in \cite{constgen,constrange} and their connections with categories explored. 

An inductive groupoid is an ordered groupoid  in which the domain elements are a meet semilattice under the given order.  The parent category of ordered groupoids shares many of the nice properties of inductive groupoids and hence of inverse semigroups:   indeed 
ordered groupoids are important in the  study of inductive groupoids.  There is therefore interest in whether ordered groupoids in general correspond to any kind of purely algebraic structure. The aim of this work is to show that they do - indeed they correspond exactly to constellations having a natural notion of inverse. 

In \cite{csg}, the notion of a ``true inverse'' of an element in a left restriction semigroup $S$ was considered: $s\in S$ has {\em true inverse} $t$ if $st=D(s)$ and $ts=D(t)$.  It was shown that true inverses are unique if they exist, and that if every element has a true inverse then $S$ is an inverse semigroup in which $s'$ (that is, the inverse of $s$ in the  inverse semigroup) is the true inverse of $s$.  Conversely, it is easy to show that every inverse semigroup gives rise to a left restriction semigroup in which every element has a true inverse, by setting $D(s)=ss'$ for all $s\in S$. These constructions are easily seen to be mutually inverse, so one may view inverse semigroups as nothing but left restriction semigroups in which every element has a true inverse.  

Our main result, Theorem~\ref{invcorresp}, uses an analogous notion of D-inverse  to define analogues of ordered groupoids within the class of constellations.  This reframing of an ordered structure to a purely algebraic one gives a new viewpoint, and allows alternative approaches to the theory of ordered groups and of inverse semigroups, already demonstrated in \cite{kaden}.

In Section \ref{sec:back}, we recall the required background notions.  
In Section \ref{sec:Dreg}, attention turns to the definitions and basic properties of D-regular and D-inverse constellations, defined by analogy with left restriction semigroups in which each element has a true inverse.  We show that small D-inverse constellations have a Cayley-style representation theorem in terms of one-to-one partial maps, generalising the Vagner-Preston theorem for inverse semigroups.

In Section \ref{sec:equiv}, we show in Theorem~\ref{invcorresp} that D-inverse constellations are nothing but ordered groupoids ``in disguise".  We do this by exploiting a known correspondence between constellations with range and ordered categories with restriction, established in \cite{constrange}.  This correspondence raises the possibility of using D-inverse constellations as a tool to study ordered groupoids and hence invere semigroups.  

 Indeed, in Section \ref{sec:construct}, we give an illustration of how one might use D-inverse constellations, by giving a construction of D-inverse constellations from semigroups with a distinguished set of idempotents.  This construction generalises both Nambooripad's construction of an ordered groupoid from a regular semigroup and Lawson's construction of an ordered groupoid from the partial isometries of a semigroup with involution.

Inverse semigroups are not defined using a domain operation, although such an operation can be defined in any inverse semigroup, in terms of composition and inverse.  So it is not surprising that D-inverse constellations may also be given a ``domain-free" description, using the notion of a pre-constellation (these satisfy the defining laws of constellations that do not involve the domain operation).  This is the subject of Section \ref{sec:pre}, where we show that there is a more general notion of ``inverse pre-constellation" amongst which D-inverse constellations may be characterised in purely pre-constellation terms.

\section{Background} \label{sec:back}   

We set out the necessary notions for this article, and clarify our notational conventions.

\subsection{Notational conventions}

Throughout, we generally write functions on the right of their arguments rather than the left, so ``$xf$" rather than ``$f(x)$".  Correspondingly, we write function compositions left to right, so that ``$fg$" is ``first $f$, then $g$".  An exception to this is unary operation application: if $D$ is a unary operation on the set $S$, we write $D(s)$ for $s\in S$.

\subsection{Partial binary algebras}

In what follows our convention is that if an algebra or partial structure is denoted by $\mathcal{A}$ (or $\mathscr{A}$), then we use $A$ for the underlying set.

\begin{dfn}
Let $\C=(C,\, \cdot\,)$ be a class $C$ $($often a set$)$ equipped with a partial binary operation $\cdot$.  
\bi 
\item We say $e\in C$ is a \emph{right identity} if it is such that, for all $x\in C$, if $x\cdot e$ exists then it equals $x$; left identities are defined dually.
\item The collection of right identities of $\C$ is denoted by $RI(\C)$.
\item An \emph{identity} is both a left and right identity. 
\item  An element $e\in C$ is {\em idempotent} if $e\cdot e$ exists and equals $e$.
\item We let $E(\C)=\{e\in C\mid e\cdot e=e\}$. 
\ei 
\end{dfn}

\subsection{Categories and groupoids}

For the purposes of this paper it is convenient to give  an object-free formulation of a category, as occurs in the 
other works in this area, for example \cite{constgen} and \cite{lawson1},  which have brought us to this point in the research development. Recall that in this framework a \emph{category} $\C=(C,\, \circ\,)$ is a class $C$ equipped with a partial binary operation $\circ$ satisfying the following:
\bi
\item[] (Cat1) $x\circ(y\circ z)$ exists if and only if $(x\circ y)\circ z$ exists, and then the two are equal;
\item[] (Cat2) if $x\circ y$ and $y\circ z$ exist then so does $x\circ(y\circ z)$;
\item[] (Cat3) for each $x\in \C$, there are identities $e,f$ such that $e\circ x$ and $x\circ f$ exist.
\ei

The identities $e,f$ in (Cat3) are easily seen to be unique,  
and  we write $D(x)=e$ and $R(x)=f$.  (Note that this is the opposite of the convention often used, and corresponds to the fact that we view a composition of functions $fg$ as ``first $f$, then $g$", discussed earlier.)

It also follows easily that every identity $e$ is idempotent, and $D(e)=e=R(e)$, and that the collection of domain elements $D(x)$ (equivalently, range elements $R(x)$) is precisely the collection of identities in the category; we denote this set by $D(\C)$.    If $x,y$ are elements of a category $\C$, then the product $x\circ y$ exists if and only if $R(x)=D(y)$, and if $x\circ y$ exists then $D(x\circ y)=D(x)$ and $R(x\circ y)=R(y)$.  

A {\em functor} $\rho$ from $\C$ to $ \D$, where $\C=(C,\, \circ\,)$
and $\D=(D,\, \circ\,)$ are categories, is a map $\rho:C\rightarrow D$ such that for all $x\in \C$, $D(x)\rho=D(x\rho)$, $R(x)\rho=R(x\rho)$, and if $x\circ y$ exists in $\C$, then it equals $x\rho\circ y\rho$ (which must exist); that this is equivalent to the usual object-morphism notion of functor follows from the definitions.

In view of this, particularly when our category is small, we will often view a category as a partial algebra $\C=(C,\, \circ\, ,D,R)$.  With this viewpoint, a small category with a single identity (equivalently, in which all products are defined) is nothing but a monoid.

A {\em groupoid} is a structure $\Q=(Q,\, \circ\, ,\, '\, ,D,R)$ such that $(Q,\, \circ\, ,D,R)$ is a category and $'$  is a unary operation  on $Q$, such that for all $s\in Q$, $s\circ s'=D(s)$ and $s'\circ s=R(s)$.  Again with this viewpoint, a small groupoid with a single identity is simply a group.

\subsection{Ordered categories and groupoids}

Following \cite{lawson1} (and using nomenclature consistent with that used there), we say a category $\C=(C,\, \circ ,D,R)$ equipped with a binary relation $\leq$ on $C$ is an {\em ordered category with restrictions} if:
\ben[label=\textup{(OC\arabic*)}]
\item $(C,\leq)$ is a poset;
\item $x\leq y$ implies $D(x)\leq D(y)$ and $R(x)\leq R(y)$;  \label{DRmono}
\item  if $x_1\leq x_2$ and $y_1\leq y_2$ and both $x_1\circ y_1$ and $x_2\circ y_2$ exist,  then $x_1\circ y_1\leq x_2\circ y_2$;\label{OC3}
\item  if $D(x)=D(y)$ and $R(x)=R(y)$ and $x\leq y$ then $x=y$ ($\leq$ is trivial on hom-sets). \label{hom}
\een
It has
\ben[label=\textup{(OC8(i))}]
\item {\em restrictions} if, for all $x\in C$ and $e\in D(\C)$ for which $e\leq D(x)$, there exists a unique element, called the {\em restriction of $x$ to $e$} and denoted $e|x$, such that $e|x\leq x$ and $D(e|x)=e$,  \label{OC8i}
\een

(We remark that the term `ordered category' is used in the literature to mean a category satisfying the conditions above, together with the left-right dual of (OC8(i)).)

An {\em ordered groupoid} $\Q=(Q,\, \circ\, ,\, '\, ,D,R,\leq)$ is a groupoid $\Q=(Q,\, \circ\, ,\, '\, ,D,R)$ equipped with a partial order $\leq$ obeying the following axioms:

\ben[label=\textup{(OG\arabic*)}]
\item $x\leq y$ implies $x'\leq y'$;

\item Law \ref{OC3} for ordered categories with restrictions:  

\item Law \ref{OC8i} for ordered categories with restrictions.  \label{OG3}
\een
Note that although Laws \ref{DRmono} and \ref{hom} in the definition of an ordered category with restrictions given earlier are not part of the definition of an ordered groupoid, they follow easily.  So ordered groupoids are certainly ordered categories with restriction. It follows that there is also a notion of corestriction, defined as $s|e=(e|s')'$ whenever $e\leq R(s)$, satisfying the obvious axiom dual to \ref{OG3}, and this is often included as part of the definition.

\subsection{Inductive groupoids and inverse semigroups} 

To provide the context for our main result, Theorem~\ref{invcorresp}, we recall the ESN theorem. 

First, an {\em inductive groupoid} is an ordered groupoid $\Q$ in which $D(\Q)$ forms a semilattice under the ordering. On the other hand, a  semigroup $S$ is {\em inverse} if for every $a\in S$ there exists a {\em unique} $b\in S$ such that $a=aba$ and $b=bab$; usually, 
$b$ is denoted by $a'$. In an inverse semigroup the idempotents of $S$ commute, and thus form a semilattice under the ordering
$a\leq b$ if $ab=a$, where meets are given by the semigroup product. (For the details, see \cite{lawsoninverse}.)  The canonical example of an inverse semigroup is the symmetric inverse monoid $\mathscr{I}_X$ on a set $X$. The elements 
of $\mathscr{I}_X$ are partial bijections of $X$ and the composition $fg$  of $f$ and $g$ in
$\mathscr{I}_X$ is that of  partial functions.

The ESN theorem says that the category of inductive groupoids with appropriately defined functors is equivalent (in fact, isomorphic) to the category of inverse semigroups and semigroup morphisms. An exposition of this result and its history is given in \cite{lawsoninverse}. One aspect crucial to setting up this equivalence is the inductive nature of the groupoid. We aim to answer the question: what algebraic construct corresponds to an ordered groupoid? For this, we use constellations.

\subsection{Constellations}

In what follows we make use of an earlier equivalent definition given in \cite{constell} and  make use of a result in \cite{constgen}. We define a \emph{constellation} $(\Q,\cdot)$  to be a class $\Q$ equipped with a partial binary operation $\cdot$ satisfying the following:
\bi
\item[] (Const1) if $x\cdot(y\cdot z)$ exists then $(x\cdot y)\cdot z$ exists, and then the two are equal;
\item[] (Const2) if $x\cdot y$ and $y\cdot z$ exist then so does $x\cdot(y\cdot z)$;
\item[] (Const3) for each $x\in P$, there is a unique right identity $e$ such that $e\cdot x=x$.
\ei
Since $e$ in (Const3) is unique given $x\in \Q$, we call it $D(x)$.  
It follows that $D(\Q)=\{D(s)\mid s\in \Q\}=RI(\Q),$ the set of right identities of $\Q$.  We call $D(\Q)$ the {\em projections} of $\Q$.  We adopt the usual convention of referring to the constellation $(\Q,\cdot)$ simply as $\Q$ if there is no ambiguity. However, following the convention for categories, and since $D$ may be viewed as a unary operation, we also often view constellations as partial algebras $(\Q,\cdot\, ,D)$.      We say a constellation $\Q$ is {\em small} if $\Q$ is a set.

The following are some useful basic facts about constellations, to be found in \cite{constell} or \cite{constgen}.

\begin{result}  \label{2p3}
For elements $s,t$ of the constellation $\Q$, we have that $s\cdot t$ exists if and only if $s\cdot D(t)$ exists, and then $D(s\cdot t)=D(s)$.
\end{result}

{As shown in \cite{constgen}, every category becomes a constellation when the operation $R$ is ignored. It is easy to see that a constellation arises from a category as a reduct in this way if and only if for all $s\in \Q$ there is a unique $e\in D(\Q)$ such that $s\cdot e$ exists, and then $R(s)=e$ when $\Q$ is viewed as a category.

We emphasise that by very definition a constellation $\Q$ is a (partial) algebra. However, as for inverse and left restriction semigroups, from an algebraic structure we obtain an ordered structure. Specifically, we define the relation 
 $s\leq t$ for $s,t\in \Q$ whenever $s=e\cdot t$ for some $e\in D(\Q)$ (equivalently, $s=D(s)\cdot t$). The restriction to $D(\Q)$, also denoted by $\leq$,  is then given by $e\leq f$ if and only if $e\cdot f(=e)$ exists. 
A constellation $\Q$ is {\em normal} if $\mbox{for all }e,f\in D(\Q),\mbox{ if $e\cdot f$ and $f\cdot e$ exist, then }e=f$.   This is a property reminiscent of idempotents in a semigroup commuting, and relates to whether the given relations on $\Q$ are  partial orders.

\begin{result}  \label{parnormal} Let
$\Q$ be a constellation.  Then the relation $\leq$ on $\Q$ is a quasiorder which we call the {\em natural quasiorder}; its restriction to  $D(\Q)$ is called the {\em standard quasiorder}.   In both these cases, the quasiorder is a partial order if and only if $\Q$ is normal, and then we use ``order" rather than ``quasiorder".
\end{result}

Certainly not all constellations are normal. Indeed, any quasi-ordered set $(P,\leq)$  carries the structure of a constellation 
$\mathcal{P}=(P, \cdot)$, where  $x\cdot y$ is defined if and only if $x\leq y$ and then $x\cdot y=x$ \cite{constell}; this constellation is normal if and only if $(P,\leq)$ is a poset. An important example of a small normal constellation, introduced in \cite{constell}, is ${\mc C}_X$, with underlying set consisting of the partial functions on the set $X$, in which $s\cdot t$ is the usual composite of $s$ followed by $t$ provided $\Img(s)\subseteq \Dom(t)$, and undefined otherwise, and $D(s)$ is the restriction of the identity map on $X$ to $\Dom(s)$. 

A {\em subconstellation} $\Pcon$ of a constellation $\Q$ is a subset of $\Q$ that is closed under the constellation product wherever it is defined, and closed under $D$; then $\Pcon$ is  a constellation in its own right as is easily seen \cite{constgen}.
An important subconstellation of  $\C_X$ consists of the elements which are one-one as (partial) maps; this yields the  constellation $\I_X$, which has the same underlying set $I_X$ as $\mathscr{I}_X$, but a restricted set of products.

For constellations, the notion of morphism is as follows.  If $\Q_1,\Q_2$ are constellations, a function $\rho:\Q_1\rightarrow \Q_2$ is a {\em radiant} if for all $s,t\in \Q_1$ for which $s\cdot t$ exists, then so does $(s\rho)\cdot (t\rho)$ and  $(s\cdot t)\rho=(s\rho)\cdot (t\rho)$, and $D(s\rho)=D(s)\rho$.  As observed in \cite{constell}, in a standard way we have a (large) category with objects constellations and the  morphisms being radiants.  

A {\em strong} radiant $\rho:\Q_1\rightarrow \Q_2$ is a radiant for which, for all $s,t\in \Q_1$, $s\cdot t$ exists if and only if $s\rho\cdot t\rho$ does.  We say $\rho:\Q_1\rightarrow \Q_2$ is an {\em isomorphism} if $\rho$ is a strong bijective radiant, and an {\em embedding} if it is a strong injective radiant. 
Note that an isomorphism is precisely a  radiant with a radiant inverse.  If $\rho:\Q_1\rightarrow \Q_2$ is an embedding then the image of $\rho$ is a subconstellation of $\Q_2$ that is isomorphic to $\Q_1$.

 We say a constellation $\Q_1$ {\em embeds} in the constellation $\Q_2$ if there is an embedding $\rho:\Q_1\rightarrow \Q_2$.   It was shown in \cite{constgen} that every small normal constellation embeds as a subconstellation in the (normal) constellation $\C_X$ for some choice of $X$.

\section{D-regular and D-inverse constellations}  \label{sec:Dreg}

In this section we introduce the central new notion of this article, namely that of a D-inverse constellation.

\subsection{D-regularity} \label{sec:regular}

Recall that a semigroup $S$ is {\em regular} if for all $a\in S$ there exists $b\in S$ such that $a=aba$; the element $ab$ is then an idempotent and a left identity {\em for $a$.} If a regular semigroup is inverse then clearly it is regular and we have pointed out that its idempotents commute; in fact the converse is also true  \cite{lawsoninverse}.  We here consider a suitable variant of this notion for constellations.

\begin{dfn}
The constellation $\Q=(Q,\, \cdot\, , D)$ is {\em D-regular} if for all $a\in Q$ there is $b\in Q$ for which $a\cdot b=D(a)$. 
\end{dfn}

In general, for a constellation $\Pcon$, the sets $RI(\Pcon)$ and $E(\Pcon)$ can be different, as may easily be seen by considering $\C_X$, although we always have $D(\Q)=RI(\Pcon)\subseteq E(\Pcon)$.   

\begin{pro}  \label{EPinRIP}
In a D-regular constellation $\Pcon$, we have that $E(\Pcon)\subseteq RI(\Pcon)$ and so the two sets are equal $($and hence both equal $D(\Pcon)$$)$.
\end{pro}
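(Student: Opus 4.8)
The plan is to prove the stronger statement that every idempotent $e$ of $\Pcon$ coincides with its own projection $D(e)$. Since $D(e)\in D(\Pcon)=RI(\Pcon)$, this immediately yields $E(\Pcon)\subseteq RI(\Pcon)$; combined with the general inclusion $RI(\Pcon)\subseteq E(\Pcon)$ recorded just before the statement, this gives equality of all three sets.

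So I would fix $e\in E(\Pcon)$, meaning $e\cdot e=e$, and use D-regularity to choose $b\in \Pcon$ with $e\cdot b=D(e)$. The idea is then to evaluate the product $e\cdot D(e)$ in two different ways. First I would reassociate using the constellation axioms: since $e\cdot e$ and $e\cdot b$ both exist, (Const2) guarantees that $e\cdot(e\cdot b)$ exists, and then (Const1) gives $e\cdot(e\cdot b)=(e\cdot e)\cdot b=e\cdot b=D(e)$. Rewriting the left-hand side via $e\cdot b=D(e)$, this says exactly that $e\cdot D(e)=D(e)$.

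On the other hand, $D(e)$ is a right identity of $\Pcon$ (it lies in $RI(\Pcon)$ by (Const3)), and we have just seen that $e\cdot D(e)$ exists; hence $e\cdot D(e)=e$ by the defining property of a right identity. Comparing the two evaluations forces $e=D(e)$, as required.

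The only point needing care is the bookkeeping of existence of products: the reassociation relies on first invoking (Const2) to know $e\cdot(e\cdot b)$ is defined before (Const1) can rewrite it, and one must notice that this same product equals $e\cdot D(e)$, so that the right-identity property of $D(e)$ can be brought to bear. Everything else is a direct appeal to the constellation axioms, so I expect no further obstacle.
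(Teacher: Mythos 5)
Your proof is correct and is essentially the paper's own argument: both use D-regularity to pick $b$ with $e\cdot b=D(e)$, invoke (Const2) and (Const1) to get $e\cdot D(e)=e\cdot(e\cdot b)=(e\cdot e)\cdot b=D(e)$, and then compare with $e\cdot D(e)=e$ from the right-identity property of $D(e)$. Your explicit bookkeeping of existence (Const2 before Const1) just makes precise a step the paper leaves implicit.
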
 
\begin{proof}
Pick $e\in E(P)$, and suppose that $e'$ is such that $e\cdot e'=D(e)$.  Then $D(e)=e\cdot e'=(e\cdot e)\cdot e'=e\cdot(e\cdot e')=e\cdot D(e)=e$, so $e\in D(P)$.
\end{proof}

A property related to D-regularity is the following. At first sight the connection seems suprising, but the reader should recall that in a constellation we have only a partial binary operation, the domain of which is tightly controlled by the projections.

\begin{dfn}
The  constellation $\Pcon$ is {\em right cancellative} if $a\cdot c=b\cdot c$ implies that  $a=b$, for any $a,b,c\in P$.
\end{dfn}

This definition agrees with the category definition if $\Pcon$ is a category, and is relevant here because of the following.

\begin{pro}  \label{monic}
Every D-regular constellation is right cancellative.  
\end{pro}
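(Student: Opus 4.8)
The plan is to prove right cancellativity by exploiting a D-regular ``inverse'' of the element being cancelled. Suppose $a\cdot c=b\cdot c$ for $a,b,c\in P$; in particular both of these products exist. Since $\Pcon$ is D-regular, I can choose $c'\in P$ with $c\cdot c'=D(c)$. The strategy is then to right-multiply the equation $a\cdot c=b\cdot c$ by $c'$ and show that each side collapses, namely $(a\cdot c)\cdot c'=a$ and $(b\cdot c)\cdot c'=b$, whence $a=b$ follows immediately.

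For the first of these I would first settle the existence question. Since $a\cdot c$ exists and $c\cdot c'$ exists (it equals $D(c)$), axiom (Const2) guarantees that $a\cdot(c\cdot c')$ exists, and then (Const1) gives that $(a\cdot c)\cdot c'$ exists and equals $a\cdot(c\cdot c')$. Now $a\cdot(c\cdot c')=a\cdot D(c)$, and since this product exists and $D(c)\in D(\Pcon)=RI(\Pcon)$ is a right identity, it must equal $a$. Thus $(a\cdot c)\cdot c'=a$, and the identical argument applied to $b$ yields $(b\cdot c)\cdot c'=b$. Finally, from $a\cdot c=b\cdot c$ we obtain $(a\cdot c)\cdot c'=(b\cdot c)\cdot c'$, so the two computations give $a=b$, as required.

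The only delicate point, and the main thing to watch, is the bookkeeping of which products are defined, since the constellation operation is merely partial. The key observations are that (Const2) is exactly what allows us to form the reassociated product $a\cdot(c\cdot c')$ from the two given products, that (Const1) then permits reassociation back to $(a\cdot c)\cdot c'$, and that once $a\cdot D(c)$ is known to exist the right-identity property of $D(c)$ forces it to equal $a$. (Alternatively, one can invoke Result~\ref{2p3} to see directly that $a\cdot D(c)$ exists precisely because $a\cdot c$ does.) Notably, no appeal to normality or to any order-theoretic structure is needed for this argument.
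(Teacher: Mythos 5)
Your proof is correct and follows essentially the same route as the paper's: both use D-regularity to produce $c'$ with $c\cdot c'=D(c)$, invoke (Const2) and (Const1) to form and reassociate $a\cdot(c\cdot c')=(a\cdot c)\cdot c'$, and conclude via the right-identity property of $D(c)$. The only difference is presentational (you collapse each side to $a$ and $b$ separately rather than chaining one string of equalities), so there is nothing to add.
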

\begin{proof}
Suppose that $\Q=(Q,\, \cdot\, , D)$ is a D-regular constellation.  If $a,b,c\in Q$ and $a\cdot c=b\cdot c$ then since there exists $d\in Q$ for which $c\cdot d=D(c)$, both $a\cdot (c\cdot d)$ and $b\cdot (c\cdot d)$ exist.
Then  $a\cdot (c\cdot d)=(a\cdot c)\cdot d=(b\cdot c)\cdot d=b\cdot (c\cdot d)$, so $a\cdot D(c)=b\cdot D(c)$, giving that  $a=b$.
\end{proof}

Let $X$ be a set. The subconstellation of one-to-one partial functions $\I_X=(I_X,\, \cdot\, ,D)$ of $\C_X$ is right cancellative and normal (recalling that $s\cdot t$ is interpreted as ``first $s$ then $t$'').  Conversely, there is a ``Cayley theorem" for right cancellative constellations, as follows.

\begin{pro}  \label{rcinj} A small constellation $\Pcon=(P,\, \cdot\, ,D)$  is normal and right cancellative if and only if it embeds in $\I_P=(I_P,\, \cdot\, ,D)$. 
\end{pro}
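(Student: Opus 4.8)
The plan is to prove the two directions separately, with the reverse (embedding) direction carrying all the real content. For the forward direction, suppose $\Pcon$ embeds in $\I_P$. Then $\Pcon$ is isomorphic to a subconstellation of $\I_P$. Since the text has already recorded that $\I_P$ is normal and right cancellative, and both properties are visibly inherited by subconstellations (the hypothesis of each property concerns only products that happen to exist, and such products agree with those of the ambient constellation) and are preserved under isomorphism, it follows that $\Pcon$ is normal and right cancellative. This direction is routine.

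For the reverse direction, assume $\Pcon=(P,\,\cdot\,,D)$ is normal and right cancellative; I will produce an explicit Cayley-style embedding $\rho\colon\Pcon\rightarrow\I_P$. For each $a\in P$ I define a partial map $\rho_a$ on $P$ by setting $\Dom(\rho_a)=\{x\in P: x\cdot a \text{ exists}\}$ and $x\rho_a=x\cdot a$. Right cancellativity says precisely that each $\rho_a$ is one-to-one, so $\rho_a\in I_P$, and I put $a\rho=\rho_a$. The insight to keep in mind throughout is to include $D(a)$ in the domain of $\rho_a$: since $a=D(a)\cdot a=D(a)\rho_a$, the element $a$ itself lies in $\Img(\rho_a)$, and this fact will drive both the strong and injectivity verifications.

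The first task is to check that $\rho$ is a radiant. Using Result~\ref{2p3} (which gives that $x\cdot a$ exists if and only if $x\cdot D(a)$ exists, together with $D(a\cdot b)=D(a)$) and the fact that $D(a)$ is a right identity, one verifies $D(\rho_a)=\rho_{D(a)}$: both are the identity map restricted to $\{x: x\cdot a \text{ exists}\}$. For the multiplicative condition, if $a\cdot b$ exists then for any $x$ with $x\cdot a$ defined, (Const2) and (Const1) give that $(x\cdot a)\cdot b$ exists and equals $x\cdot(a\cdot b)$; hence $\Img(\rho_a)\subseteq\Dom(\rho_b)$, so $\rho_a\cdot\rho_b$ exists in $\I_P$, and a short domain comparison (again via Result~\ref{2p3}) shows it equals $\rho_{a\cdot b}$.

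It remains to show that $\rho$ is strong and injective, and the injectivity step is the main obstacle to watch, as it is exactly where normality is used. For strongness, the non-trivial inclusion is that the existence of $\rho_a\cdot\rho_b$ in $\I_P$ forces $a\cdot b$ to exist: since $a=D(a)\cdot a=D(a)\rho_a\in\Img(\rho_a)\subseteq\Dom(\rho_b)$, the product $a\cdot b$ exists. For injectivity, suppose $\rho_a=\rho_b$. Because $D(a)\in\Dom(\rho_a)=\Dom(\rho_b)$, we obtain $a=D(a)\cdot a=D(a)\rho_b=D(a)\cdot b$, so $a\leq b$ in the natural quasiorder; symmetrically $b\leq a$. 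Normality makes $\leq$ a genuine partial order by Result~\ref{parnormal}, so antisymmetry yields $a=b$. Thus $\rho$ is a strong injective radiant, that is, an embedding, which completes the proof.
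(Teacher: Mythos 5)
Your proof is correct and takes essentially the same approach as the paper: both directions use the same Cayley-style map $\rho$ sending $s$ to the partial map $\rho_s\colon x\mapsto x\cdot s$, with right cancellativity forcing each $\rho_s$ into $I_P$ and normality giving injectivity of $\rho$. The only difference is that you verify the radiant, strongness and injectivity properties from scratch (correctly), whereas the paper outsources the embedding of a normal constellation into $\C_P$ to the cited result from \cite{constgen} and only adds the injectivity observation.
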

\begin{proof}  One direction is clear, since the properties of being normal and of being right cancellative are inherited by subconstellations.

For the converse, suppose that $\Pcon$ is normal and right cancellative.
It was shown in \cite{constgen} that the radiant $\rho:\Pcon\rightarrow \C_P$ taking $s\in P$ to $\rho_s\in C_P$ given by $x\rho_s:=x\cdot s$ for all $x\in P$ for which the latter is defined, is an embedding of $\Pcon$ in $\C_P$ as a constellation.  It is easy to see that each $\rho_s$ is injective if (and only if) $\Pcon$ is right cancellative, and so the image of 
 $\rho$ lies within $\I_P$.
\end{proof}

\begin{pro}  \label{Dregim}
Suppose that $\Pcon,\Q$ are constellations and $\Pcon$ is D-regular, and that $\rho: \Pcon\rightarrow \Q$ is a surjective radiant. Then $\Q$ is D-regular.
\end{pro}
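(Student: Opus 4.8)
The plan is to chase an arbitrary element of $\Q$ back through $\rho$, exploit D-regularity upstairs in $\Pcon$, and then push the resulting equation forward using the radiant axioms. First I would fix an arbitrary $q\in Q$; the goal is to produce some $c\in Q$ with $q\cdot c=D(q)$. Since $\rho$ is surjective, I may choose $a\in P$ with $a\rho=q$. D-regularity of $\Pcon$ then supplies $b\in P$ with $a\cdot b=D(a)$.

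The key step is to apply $\rho$ to this single equation. Because $a\cdot b$ exists in $\Pcon$, the radiant axiom guarantees that $(a\rho)\cdot(b\rho)$ exists in $\Q$ and equals $(a\cdot b)\rho$. Combining $(a\cdot b)\rho=D(a)\rho$ with $D(a)\rho=D(a\rho)$ (the radiant axiom again, this time preserving $D$), and recalling $a\rho=q$, I obtain $q\cdot(b\rho)=D(q)$. Thus $c:=b\rho$ witnesses D-regularity at $q$, and since $q$ was arbitrary, $\Q$ is D-regular.

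I expect no serious obstacle here: the argument uses only the forward-preservation half of the radiant definition (products that exist are sent to products that exist, and $D$ is preserved) together with surjectivity to ensure that every element of $\Q$ has a preimage. The one point worth flagging is that a radiant need not \emph{reflect} the existence of products, but we never invoke that direction; all that is required is that the equation $a\cdot b=D(a)$ transports cleanly into $\Q$, which the axioms deliver directly.
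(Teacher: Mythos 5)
Your proof is correct and is essentially the same argument as the paper's: both push the equation $a\cdot b=D(a)$ forward through $\rho$ using the radiant axioms ($(a\cdot b)\rho=(a\rho)\cdot(b\rho)$ and $D(a)\rho=D(a\rho)$) and invoke surjectivity to cover every element of $\Q$. The paper merely states it in the opposite order (starting from the equation in $\Pcon$ rather than from an arbitrary element of $\Q$), which is a purely presentational difference.
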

\begin{proof}
Suppose that $s\cdot t=D(s)$, where $s,t\in P$.  Then $D(s\rho)=D(s)\rho=(s\cdot t)\rho=s\rho\cdot t\rho$, so, since $\rho$ is surjective, 
we deduce that $\Q$ is D-regular.
\end{proof} 

\subsection{D-inverses}\label{sub:dinv}

\begin{dfn}
If $\Q$ is a constellation, we say the element $s\in Q$ has {\em D-inverse} $t\in Q$ if $s\cdot t=D(s)$ and $t\cdot s=D(t)$.  
\end{dfn}  

If $t$ is a D-inverse for $s$ in a constellation, then obviously $s$ is a D-inverse for $t$ also. 

We have the following generalisation of a familiar fact about inverses in categories (which are normal when viewed as constellations).

\begin{pro}  \label{normunique}
In a constellation $\Q$, every $e\in D(\Q)$ is a D-inverse for itself, and each element of $\Q$ has at most one D-inverse if and only if $\Q$ is normal.
\end{pro}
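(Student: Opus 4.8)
The plan is to prove the preliminary assertion first and then the two directions of the biconditional separately, using only the constellation axioms (Const1)--(Const3), the inclusion $D(\Q)=RI(\Q)\subseteq E(\Q)$ noted earlier, and Result~\ref{parnormal}. For the preliminary claim, fix $e\in D(\Q)$. Then $e\cdot e=e$ since projections are idempotent, and because $e$ is a right identity satisfying $e\cdot e=e$, the uniqueness in (Const3) forces $D(e)=e$. Hence $e\cdot e=e=D(e)$, so $e$ is its own D-inverse.

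For the direction \emph{$\Q$ normal implies uniqueness}, suppose $s$ has D-inverses $t_1$ and $t_2$, so that $s\cdot t_i=D(s)$ and $t_i\cdot s=D(t_i)$ for $i=1,2$. The crux is a sandwich computation. Since $s\cdot t_2=D(s)$ and $D(s)$ is a right identity, the product $t_1\cdot(s\cdot t_2)=t_1\cdot D(s)$ exists by (Const2) (as $t_1\cdot s$ and $s\cdot t_2$ both exist) and equals $t_1$. By (Const1), $(t_1\cdot s)\cdot t_2$ then also exists and agrees with it, so $D(t_1)\cdot t_2=t_1$. This exhibits $t_1=D(t_1)\cdot t_2$ with $D(t_1)\in D(\Q)$, i.e.\ $t_1\leq t_2$ in the natural quasiorder; the symmetric argument gives $t_2\leq t_1$. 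As $\Q$ is normal, Result~\ref{parnormal} says the natural quasiorder is a partial order, and antisymmetry forces $t_1=t_2$.

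For \emph{uniqueness implies normal} I would argue contrapositively. If $\Q$ is not normal, choose distinct $e,f\in D(\Q)$ with both $e\cdot f$ and $f\cdot e$ existing; since $e,f$ are right identities these products equal $e$ and $f$ respectively, and from the preliminary claim $D(e)=e$ and $D(f)=f$. Then $e\cdot f=e=D(e)$ and $f\cdot e=f=D(f)$ show that $f$ is a D-inverse of $e$, while $e$ is a D-inverse of itself; as $e\neq f$, the element $e$ has two distinct D-inverses. Thus any failure of normality yields a non-unique D-inverse, which is exactly the contrapositive.

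The main obstacle is getting the existence bookkeeping right in the sandwich step: one must verify, in the correct order and using (Const2) followed by (Const1), that each intermediate product is defined before concluding $D(t_1)\cdot t_2=t_1$, since in a constellation the product is only partial. Once this identity and its mirror image are established, the reduction to antisymmetry of the natural order via normality is immediate, and the reverse direction is a direct construction requiring no further work.
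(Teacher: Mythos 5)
Your proof is correct and follows essentially the same route as the paper's: the sandwich computation $t_1=t_1\cdot(s\cdot t_2)=(t_1\cdot s)\cdot t_2=D(t_1)\cdot t_2$ giving $t_1\leq t_2$ (and symmetrically $t_2\leq t_1$), then antisymmetry via Result~\ref{parnormal}, and for the converse the observation that mutually composable projections are D-inverses of one another. Your phrasing of the converse as a contrapositive, and your use of (Const2) on $t_1\cdot s$ and $s\cdot t_2$ directly rather than on $t_1\cdot s$ and $s\cdot t_1$ followed by substitution, are only cosmetic differences.
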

\begin{proof}  
For $e\in D(\Q)$, $e\cdot e=e=D(e)$ so $e$ is a D-inverse of itself.

Suppose that $\Q$ is normal and $s_1$ and $s_2$ are both D-inverses of $s\in Q$.  Then $s_1\cdot s$ and $s\cdot s_1$ exist, whence so does $s_1\cdot(s\cdot s_1)=s_1\cdot D(s)=s_1$ by (Const2).  By (Const1), 
$$s_1 =s_1\cdot D(s)=s_1\cdot (s\cdot s_2)=(s_1\cdot s)\cdot s_2=D(s_1)\cdot s_2,$$
so $s_1\leq s_2$ under the natural order on $\Q$.  Similarly, $s_2\leq s_1$, and so $s_1=s_2$ by Result \ref{parnormal}.

Conversely, if D-inverses are unique, suppose that $e\cdot f$ and $f\cdot e$ both exist where $e,f\in D(\Q)$.  Then $e\cdot f=e=D(e)$ and $f\cdot e=f=D(f)$, and so $f$ is a D-inverse of $e$, whence $f=e$ by uniqueness.  This establishes normality.
\end{proof}

\begin{dfn}
We say the constellation $\Q$ is a {\em D-inverse constellation} if every element of $\Q$ has a unique D-inverse.  We use $s'$ to denote the unique D-inverse of $s$. 
\end{dfn}  

From Proposition \ref{normunique}, we obtain the following.

\begin{cor}  \label{corDinv}
The constellation $\Q$ is D-inverse if and only if every element of $\Q$ has a D-inverse and $\Q$ is normal.
\end{cor}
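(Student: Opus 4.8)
The plan is to deduce this directly from Proposition~\ref{normunique}. The key observation is that the defining condition of a D-inverse constellation, namely that every element has a \emph{unique} D-inverse, splits into two separate assertions: an existence statement (every element has at least one D-inverse) and a uniqueness statement (every element has at most one D-inverse). The corollary then amounts to replacing the uniqueness half by the equivalent condition of normality supplied by Proposition~\ref{normunique}.

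First I would prove the forward direction. Assuming $\Q$ is D-inverse, every element has a unique D-inverse; in particular every element has a D-inverse, which gives the existence half immediately. Moreover, uniqueness means each element has at most one D-inverse, so by the equivalence established in Proposition~\ref{normunique}, the constellation $\Q$ is normal.

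Conversely, suppose that every element of $\Q$ has a D-inverse and that $\Q$ is normal. Normality, together with Proposition~\ref{normunique}, forces each element to have at most one D-inverse. Combining this with the assumed existence of D-inverses, each element has exactly one D-inverse, which is precisely the statement that $\Q$ is D-inverse.

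There is no real obstacle here: the entire content of the corollary is already packaged into Proposition~\ref{normunique}, and the only point requiring a moment's care is the logical decomposition of ``has a unique D-inverse'' into ``has at least one'' and ``has at most one'', so that the existence hypothesis and the normality hypothesis can be matched up cleanly with the two halves of that condition.
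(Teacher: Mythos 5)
Your proof is correct and is precisely the paper's intended derivation: the paper simply states that the corollary follows from Proposition~\ref{normunique}, and your decomposition of ``unique D-inverse'' into existence plus the at-most-one condition (the latter being equivalent to normality by that proposition) is exactly the argument being invoked.
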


It is clear from the definitions that if  $\Pcon$ and $\Q$ are D-inverse constellations with $\rho: \Pcon\rightarrow \Q$ a radiant, then $(s\rho)'=s'\rho$. We can say something more.

\begin{pro}  \label{Dregim2}
Suppose that $\Pcon,\Q$ are constellations, with $\Pcon$ being D-inverse and $\Q$ normal, and suppose that  $\rho: \Pcon\rightarrow \Q$ is  a surjective radiant. Then $\Q$ is D-inverse, and $(s\rho)'=s'\rho$.
\end{pro}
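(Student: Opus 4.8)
The plan is to reduce the claim to Corollary~\ref{corDinv}. Since $\Q$ is assumed normal, that corollary tells us it suffices to prove that every element of $\Q$ has a D-inverse; normality will then automatically upgrade ``has a D-inverse'' to ``has a \emph{unique} D-inverse'', giving that $\Q$ is D-inverse.

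First I would exploit surjectivity: an arbitrary element of $\Q$ has the form $s\rho$ for some $s\in P$. Since $\Pcon$ is D-inverse, $s$ has a D-inverse $s'\in P$, so $s\cdot s'=D(s)$ and $s'\cdot s=D(s')$. The central step is to transfer these two equations through $\rho$. Because $\rho$ is a radiant, whenever a product exists it is preserved and $D$ commutes with $\rho$; applying this to $s\cdot s'=D(s)$ gives $(s\rho)\cdot(s'\rho)=(s\cdot s')\rho=D(s)\rho=D(s\rho)$, and symmetrically $(s'\rho)\cdot(s\rho)=D(s')\rho=D(s'\rho)$. Hence $s'\rho$ is a D-inverse of $s\rho$ in $\Q$.

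This shows every element of $\Q$ has a D-inverse, and combined with the assumed normality, Corollary~\ref{corDinv} yields that $\Q$ is D-inverse. For the final formula, once $\Q$ is known to be D-inverse its D-inverses are unique; since we have exhibited $s'\rho$ as a D-inverse of $s\rho$, uniqueness forces $(s\rho)'=s'\rho$.

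I do not expect any genuine obstacle here: the argument is a direct push-forward of the two defining D-inverse equations through the radiant, extending the remark made just before the statement from the case where $\Q$ is already D-inverse to the present one. The only point requiring care is that existence of D-inverses alone does not give that $\Q$ is D-inverse---one needs uniqueness---which is precisely why the hypothesis that $\Q$ is normal is invoked through Corollary~\ref{corDinv}.
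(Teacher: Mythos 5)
Your proposal is correct and follows the paper's own argument almost verbatim: push the two defining equations through the radiant (the paper cites the computation in the proof of Proposition~\ref{Dregim} for this), then use surjectivity and Corollary~\ref{corDinv} with the normality hypothesis to conclude $\Q$ is D-inverse, with uniqueness giving $(s\rho)'=s'\rho$.
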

\begin{proof}
From the proof of Proposition \ref{Dregim}, we see that $s'\rho$ is a D-inverse of $s\rho$. Since $\rho$ is surjective we deduce from  Corollary \ref{corDinv} that $\Q$ is D-inverse, with  $(s\rho)'=s'\rho$.
\end{proof}

A monoid is a D-inverse constellation if and only if it is a group.  Every poset $P$ gives a D-inverse constellation if we define $e\cdot f=e$ if and only if $e\leq f$; this is a constellation in which $D(e)=e$ for all $e\in P$, as noted first in \cite{constell}, and normal as noted in \cite{constgen}.  Since $e\cdot e=e=D(e)$ for all $e\in P$, we have that $e$ is a D-inverse of itself.

A further significant example of a D-inverse constellation is the constellation $\I_X$ of one-to-one partial functions on the set $X$. The constellation $\I_X$ comes }equipped with the additional operation of inversion: this is because for all $f\in \I_X$, if $f'$ is the inverse of $f$ in the usual partial function sense, then $f\cdot f'=D(f)$ and $f'\cdot f=D(f')$, so $f'$ is the D-inverse of $f$ in the constellation sense also. The  inverse semigroup $\mathscr{I}_X$ and the D-inverse constellation $\I_X$ have the same underlying set,  but  $f\cdot g$ exists in $\I_X$ if and only if $fD(g)=fgg'=f$ in $\mathscr{I}_X$.  In fact this generalises as follows, courtesy of the Vagner-Preston representation theorem, which says that any inverse semigroup embeds as a subsemigroup of some $\mathscr{I}_X$, (although a direct proof is straightforward).

\begin{pro} \label{inv2inv}
If $S$ is an inverse semigroup, then defining $s\cdot t=st$ exactly when $stt'=s$ makes $S$ into a D-inverse constellation.
\end{pro}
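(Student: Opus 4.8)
The plan is to realise the structure explicitly, taking $D(s):=ss'$ as the candidate projection attached to $s$, then to verify the constellation axioms and finally apply Corollary~\ref{corDinv}. Write $\Q$ for $S$ equipped with the partial product $\cdot$. Note that the defining condition ``$s\cdot t$ exists iff $stt'=s$'' is exactly ``$s\cdot t$ exists iff $sD(t)=s$'', since $D(t)=tt'$, which is the shape predicted by Result~\ref{2p3}. First I would verify (Const3) with $D(s)=ss'$. Since $(ss')'=ss'$ and $ss'$ is idempotent, a product $y\cdot(ss')$ exists iff $y(ss')=y$, and then its value is $y$, so $ss'$ is a right identity; moreover $ss'\cdot s$ exists because $(ss')ss'=ss'$, with value $ss's=s$. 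For uniqueness, if $e\cdot s=s$ then existence forces $ess'=e$ while the value gives $es=s$; right-multiplying the latter by $s'$ yields $ess'=ss'$, so $e=ss'$. Thus $ss'$ is the unique right identity left-fixing $s$.

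Next I would check (Const1) and (Const2). The key computation, and the one requiring the most care, is that whenever $y\cdot z$ exists (so $yzz'=y$) one has $(yz)(yz)'=yzz'y'=yy'$, using $(yz)'=z'y'$. Granting this, (Const2) is immediate: if $x\cdot y$ and $y\cdot z$ exist, then $x(yz)(yz)'=xyy'=x$, so $x\cdot(yz)$ exists. For (Const1), if $x\cdot(y\cdot z)$ exists then $y\cdot z$ exists and $x(yz)(yz)'=x$; the same identity gives $xyy'=x$, so $x\cdot y$ exists, and then $(xy)zz'=x(yzz')=xy$ shows $(x\cdot y)\cdot z$ exists, both iterated products equalling the semigroup product $xyz$. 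I expect these two axioms to be the main obstacle, precisely because one must track definedness through the identity $(yz)(yz)'=yy'$.

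Finally I would establish that $\Q$ is D-inverse via Corollary~\ref{corDinv}. The semigroup inverse $s'$ is a D-inverse of $s$: the product $s\cdot s'$ exists since $ss's=s$ and equals $ss'=D(s)$, while $s'\cdot s$ exists since $s'ss'=s'$ and equals $s's=D(s')$ (using $(s')'=s$). For normality, observe that $D(\Q)$ is exactly $E(S)$, and for idempotents $e,f$ the products $e\cdot f$ and $f\cdot e$ exist iff $ef=e$ and $fe=f$ respectively; since idempotents of an inverse semigroup commute, $e=ef=fe=f$. Hence every element has a D-inverse and $\Q$ is normal, so $\Q$ is D-inverse by Corollary~\ref{corDinv}.

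Alternatively, one could sidestep the axiom-checking by invoking Vagner--Preston, as the paragraph preceding the statement suggests: an embedding of $S$ into some $\mathscr{I}_X$ preserves both product and inverse, hence preserves the relation $stt'=s$ and the induced partial product, so it is a constellation embedding of $S$ into $\I_X$ whose image is closed under taking D-inverses. Since $\I_X$ is a normal D-inverse constellation and both properties pass to subconstellations closed under D-inverse, $S$ is D-inverse. I would nonetheless favour the direct route, with the computation $(yz)(yz)'=yy'$ as its crux.
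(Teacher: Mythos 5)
Your proposal is correct, and in fact it contains two valid proofs. The paper itself gives no formal proof of this proposition: it justifies it by the Vagner--Preston theorem (embed $S$ as an inverse subsemigroup of some $\mathscr{I}_X$; the condition $stt'=s$ and the partial product transfer along the embedding, so the image is a subconstellation of $\I_X$ closed under D-inverses, hence a D-inverse constellation), remarking only that ``a direct proof is straightforward''. Your secondary paragraph is exactly this argument, while your favoured route is the direct verification that the paper omits. That verification is sound: the identity $(yz)(yz)'=yzz'y'=yy'$ (valid whenever $y\cdot z$ exists) is indeed the crux, as it simultaneously delivers (Const1) and (Const2); your (Const3) argument correctly shows $ss'$ is a right identity with $ss'\cdot s=s$ and that any $e$ with $e\cdot s=s$ equals $ss'$; and the final step --- $s'$ is a D-inverse of $s$, $D(\Q)=E(S)$, and normality via commuting idempotents, then Corollary~\ref{corDinv} --- is exactly right. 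What each approach buys: the embedding route is essentially free given the paper's prior observation that $\I_X$ is a D-inverse constellation with $f\cdot g$ existing iff $fgg'=f$, but it leans on a representation theorem; your direct route is self-contained, works without invoking Vagner--Preston, and isolates the computation that makes the definedness conditions compose, which is useful if one wants to see why the construction works rather than merely that it does.
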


We finish this section with a representation theorem for D-inverse constellations. We say that a subconstellation of a D-inverse constellation is an {\em inverse subconstellation} if it is closed under taking of inverses. Clearly such a subconstellation is itself a D-inverse constellation.

\begin{thm}  \label{normiso}  Let $\Q=(Q,\cdot\, ,D)$ be a small constellation. Then
$\Q$ is D-inverse if and only if it is an inverse subconstellation of some $\I_Q$.
\end{thm}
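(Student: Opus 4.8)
The plan is to prove the two implications separately, with the reverse implication being essentially a remark and the forward implication carrying all the content. For the reverse direction, suppose $\Q$ is (isomorphic to) an inverse subconstellation of some $\I_Q$. Since $\I_Q$ is itself a D-inverse constellation (as recorded among the examples above), and any inverse subconstellation of a D-inverse constellation is again D-inverse (the remark immediately preceding the statement), such a $\Q$ is automatically D-inverse. So all the real work lies in the forward direction.

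For the forward direction, suppose $\Q$ is D-inverse, and write $s'$ for the D-inverse of $s$. First I would assemble the hypotheses of Proposition~\ref{rcinj}. Normality of $\Q$ is immediate from Corollary~\ref{corDinv}. D-regularity is also immediate: taking $b=s'$ gives $s\cdot s'=D(s)$ for every $s$, so $\Q$ is D-regular, and hence right cancellative by Proposition~\ref{monic}. Proposition~\ref{rcinj} then supplies the Cayley radiant $\rho\colon\Q\to\I_Q$, $x\rho_s:=x\cdot s$ (whenever the latter is defined), as an embedding, each $\rho_s$ being a one-to-one partial function on $Q$. It remains only to check that the image of $\rho$ is closed under inversion in $\I_Q$, i.e.\ that it forms an inverse subconstellation.

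The crux is therefore to identify the partial-function inverse of $\rho_s$, and I expect it to be $\rho_{s'}$. To see this I would compute the effect of $\rho_{s'}$ on $x\rho_s=x\cdot s$ for $x\in\Dom(\rho_s)$. Since $s\cdot s'=D(s)$ exists and $x\cdot s$ exists, (Const2) gives that $x\cdot(s\cdot s')$ exists, whence (Const1) yields that $(x\cdot s)\cdot s'$ exists and equals $x\cdot(s\cdot s')=x\cdot D(s)$; as $D(s)$ is a right identity and $x\cdot D(s)$ exists, this equals $x$. Thus $(x\rho_s)\rho_{s'}=x$ for all $x\in\Dom(\rho_s)$. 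Applying the same computation with $s$ replaced by $s'$ (and using $(s')'=s$, which holds since D-inverses are symmetric and, here, unique) gives $(y\rho_{s'})\rho_s=y$ for all $y\in\Dom(\rho_{s'})$.

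Finally, a short lemma about partial injections closes the argument: if $f,g$ are one-to-one partial maps with $(xf)g=x$ on $\Dom f$ and $(yg)f=y$ on $\Dom g$, then these two conditions force $\Dom g=\Img f=\Dom(f^{-1})$ together with agreement there, so $g=f^{-1}$. Applying this with $f=\rho_s$ and $g=\rho_{s'}$ shows $\rho_{s'}=(\rho_s)^{-1}$, so the image of $\rho$ is closed under inverses and is the desired inverse subconstellation of $\I_Q$. The only delicate point I anticipate is keeping the existence bookkeeping straight in the displayed computation, i.e.\ verifying that each product is defined before invoking associativity, but this is handled cleanly by (Const1), (Const2) and Result~\ref{2p3}.
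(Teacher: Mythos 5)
Your proposal is correct and follows essentially the same route as the paper: both directions are handled identically, with the forward direction using the Cayley embedding $\rho$ of Proposition~\ref{rcinj} (whose hypotheses you verify explicitly via Corollary~\ref{corDinv} and Proposition~\ref{monic}, which the paper leaves implicit) and the same key computation $(x\cdot s)\cdot s'=x\cdot(s\cdot s')=x\cdot D(s)=x$. The only cosmetic difference is in the final step, where the paper identifies $\rho_s\cdot\rho_{s'}=D(s\rho)$ and $\rho_{s'}\cdot\rho_s=D(s'\rho)$ as constellation products in $\I_Q$ and invokes uniqueness of D-inverses there, whereas you conclude via a direct partial-injection argument that $\rho_{s'}=(\rho_s)^{-1}$; these amount to the same thing.
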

\begin{proof}  One direction follows from comments above. 
Suppose now that $\Q$ is D-inverse, and consider the radiant $\rho:\Q\rightarrow \C_X$ as in the proof of Proposition \ref{rcinj}.  By that result, $\rho$ embeds the constellation $\Q$ into the consellation $\I_X$.  It remains to show that for $s\in Q$
we have $s'\rho=(s\rho)'$.  Now for $x\in\Dom(s\rho)$, we have that $x\rho_s=x\cdot s$, and of course $s\cdot s'$ exists. It follows that we have that $x\cdot (s\cdot s')=(x\cdot s)\cdot s'$ exists, so that $x\cdot s\in\Dom(s'\rho)$ and then that 
$\Dom(D(s\rho))=\Dom (s\rho \, s'\rho)$. Further, 
\[(x\cdot s)\cdot s'=x\cdot (s\cdot s')=x\cdot D(s)=x,\]
so that  $\rho_s\cdot\rho_{s'}=\rho_{D(s)}=D(s\rho)$.  Similarly, $\rho_{s'}\cdot\rho_{s}=D({s'}\rho)$.  Together, these statements show that $s'\rho=\rho_{s'}=(\rho_s)'=(s\rho)'$, as required.
\end{proof}

From (Const2), in a D-inverse constellation $\Q$ we have  $s\cdot (s'\cdot s)=(s\cdot s')\cdot s=s$ for all $s\in Q$.  Other familiar facts of inverse semigroup theory such as $(st)'=t's'$ do not carry over since the existence of $s\cdot t$ does not ensure that $t'\cdot s'$ exists (even if both $s,t$ have D-inverses).  For example, let $X=\{x,y\}$, with $s,t\in \I_X$ defined as follows: $s=\{(x,x)\}$ and $t=1$ (the identity function on $X$).  Then in $\I_X$, $s'=s$, $t'=t$, $s\cdot t=s$ yet $t'\cdot s'=t\cdot s$ does not exist.  However, we do have the following.

\begin{pro}  \label{sands}
If $\Q$ is a D-inverse constellation and $s,t\in Q$ are such that both $s\cdot t$ and $t'\cdot s'$ exist, then $(s\cdot t)'=t'\cdot s'$.
\end{pro}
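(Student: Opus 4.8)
The plan is to verify directly that $t'\cdot s'$ satisfies the two defining equations of a D-inverse for $s\cdot t$, namely $(s\cdot t)\cdot(t'\cdot s')=D(s\cdot t)$ and $(t'\cdot s')\cdot(s\cdot t)=D(t'\cdot s')$; uniqueness of D-inverses (which holds since a D-inverse constellation is normal, by Corollary~\ref{corDinv}) then forces $(s\cdot t)'=t'\cdot s'$.

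First I would establish the first equation. By hypothesis $s\cdot t$ exists, so by Result~\ref{2p3} we have $D(s\cdot t)=D(s)$. The idea is to compute $(s\cdot t)\cdot(t'\cdot s')$ by regrouping the product using associativity (Const1) and (Const2), collapsing the inner factors via the D-inverse identities $t\cdot t'=D(t)$ and $s\cdot s'=D(s)$. The natural regrouping is $(s\cdot t)\cdot(t'\cdot s')=s\cdot\bigl((t\cdot t')\cdot s'\bigr)=s\cdot\bigl(D(t)\cdot s'\bigr)$, and since $t'\cdot s'$ exists we should be able to re-associate so that the $D(t)$ is absorbed, eventually reducing to $s\cdot s'=D(s)=D(s\cdot t)$. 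Dually, for the second equation I would compute $(t'\cdot s')\cdot(s\cdot t)=t'\cdot\bigl((s'\cdot s)\cdot t\bigr)$ and use $s'\cdot s=D(s')$ together with $t'\cdot t=D(t')$ to reduce it to $t'\cdot t=D(t')=D(t'\cdot s')$, the last equality again by Result~\ref{2p3} applied to the existing product $t'\cdot s'$.

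The main obstacle is the careful bookkeeping of \emph{existence} of the intermediate products, since the constellation operation is only partial and associativity in the form (Const1) is one-directional: it guarantees that if $x\cdot(y\cdot z)$ exists then $(x\cdot y)\cdot z$ exists and they agree, but not conversely. I must therefore invoke (Const2) at each step to manufacture the existence of a grouped product from the existence of its two adjacent factors, and track which factors are already known to exist from the hypotheses that $s\cdot t$ and $t'\cdot s'$ exist and from the D-inverse relations. The delicate point is confirming that each re-association is licensed: for instance, to pass from $(s\cdot t)\cdot(t'\cdot s')$ to $s\cdot\bigl(t\cdot(t'\cdot s')\bigr)$ I need $t\cdot t'$ and $t'\cdot s'$ to combine appropriately, and I expect to lean on (Const2) (existence of $s\cdot t$ and of $t\cdot(t'\cdot s')$ yielding the full bracketing) more than once. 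Once every existence assertion is justified, the algebraic collapse via the D-inverse identities and Result~\ref{2p3} is routine, and normality supplies the uniqueness that completes the argument.
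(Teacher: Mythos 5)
Your proposal is correct and follows essentially the same route as the paper's proof: use (Const2) to build the existence of $t\cdot(t'\cdot s')$ and then of $s\cdot\bigl(t\cdot(t'\cdot s')\bigr)$, re-bracket with (Const1), absorb $t\cdot t'=D(t)$ via the right-identity law $s\cdot D(t)=s$ to get $(s\cdot t)\cdot(t'\cdot s')=s\cdot s'=D(s)=D(s\cdot t)$, and conclude by uniqueness of D-inverses. The only cosmetic difference is that the paper obtains the second identity $(t'\cdot s')\cdot(s\cdot t)=D(t'\cdot s')$ by interchanging the roles of $s,t'$ and $s',t$ rather than by the explicit dual computation you sketch, which is the same argument.
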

\begin{proof}
Since $s\cdot t$, $t\cdot t'$ and $t'\cdot s'$ all exist, so does $t\cdot (t'\cdot s')=(t\cdot t')\cdot s'$ and hence $s\cdot (t\cdot (t'\cdot s'))=(s\cdot t)\cdot (t'\cdot s')$ which also equals $s\cdot((t\cdot t')\cdot s')=(s\cdot(t\cdot t'))\cdot s'=(s\cdot D(t))\cdot s'=s\cdot s'=D(s)=D(s\cdot t)$.  So $(s\cdot t)\cdot (t'\cdot s')=D(s\cdot t)$.  Similarly, on interchanging the role of $s,t'$ and $s',t$, we have $(t'\cdot s')\cdot (s\cdot t)=D(t'\cdot s')$.  We deduce that $(s\cdot t)'=t'\cdot s'$.
\end{proof}

In the small case, the above result also follows from the embeddabity of $\Q$ in $\I_X$ as in Theorem \ref{normiso}.

\section{D-inverse constellations are exactly ordered groupoids}  \label{sec:equiv}

In the examples of constellations considered in \cite{constgen}, most had a notion of range, satisfying some natural properties.  In \cite{constrange}, it was shown that they are nothing but ordered categories with restriction, a class containing all ordered groupoids.  In this section we show that every D-inverse constellation is a constellation with range, and that when viewed as an ordered category with restriction, it is nothing but an ordered groupoid.

\subsection{Constellations with range and ordered categories with restrictions}  \label{seclocr}

Let $\Q$ be a constellation. For all $s\in Q$, we define
$$s_D=\{e\in D(\Q)\mid s\cdot e\mbox{ exists, and for all }f\in D(\Q), \mbox{  if }s\cdot f\mbox{ exists then } e\leq f\}.$$

\begin{dfn}\label{def:range}
A {\em constellation with range} $\Q=(Q,\, \cdot\, , D, R)$ is a constellation $\Q$ in which for all $s\in \Q$ the set $s_D$ has a single element, namely $R(s)$, and, in addition, for all $s,t\in \Q$, if $s\cdot t$ exists then $R(s\cdot t)=R(R(s)\cdot t)$.
\end{dfn}  

The final part of Definition~\ref{def:range} is referred to as the {\em congruence condition}. Without this condition, $\Q$ would be a {\em constellation with pre-range}. 
Further, in this condition, if $s\cdot t$ exists, then so must $R(s)\cdot t$, in view of the following.

\begin{result} \cite{constgen}  \label{Rst}
Suppose that $\Q=(Q,\, \cdot\, ,D,R)$ is a constellation with pre-range. Let $s,t\in Q$. Then the following are equivalent:  $s\cdot t$ exists; $s\cdot D(t)$ exists; 
$R(s)\cdot D(t)$ exists; $R(s)\cdot t$ exists.  Further,   if $s\cdot t$ exists, then $R(s\cdot t)\leq R(t)$.
\end{result}  

The fact that $s_D$ is a singleton for every $s\in Q$ easily gives the following. 

\begin{result} \cite{constgen}  \label{Rlaws} A 
constellation with pre-range is normal.
\end{result}

The constellation $\C_X$ is a constellation with range in which $R(s)$ is the restriction of the identity map on $X$ to $\Img(s)$.  Every category is a constellation with range.

Morphisms between constellations with range are required to respect $R$.

\begin{dfn}
A {\em range radiant} is a radiant $\rho:\Q_1\rightarrow \Q_2$ between constellations with range that satisfies $R(s\rho)=R(s)\rho$ for all $s\in \Q_1$.  The definitions of strong radiants, isomorphisms and embeddings extend in the obvious ways to range radiants.  
\end{dfn} 

\begin{dfn}
An element $a$ of a constellation with range $\Q$ is {\em left cancellative} if whenever $a\cdot b=a\cdot c$, we have $R(a)\cdot b=R(a)\cdot c$.  Moreover $\Q$ is {\em left cancellative} if every element is left cancellative.
\end{dfn}

The above definition coincides with the usual definition of  ``epimorphism" if $\Q$ arises from a category.  It is also reminiscent of the condition that
an element $a$ of a semigroup $S$ be $\mathcal{L}^*$-related to an idempotent $R(a)$. Here  $\mathcal{L}^*$ is the relation given by $a\,\mathcal{L}^*\, b$ if  for any $x,y\in S^1$, we have $ax=ay$ if and only if $bx=by$. The constellation with range $\C_X$ is left cancellative. 

The following results may be found in \cite{constrange}.

\begin{result}   \label{corresp}
If $\Q=(Q,\, \cdot\, ,D,R)$ is a $($left cancellative$)$ constellation with range, then the derived category $(Q,\, \circ\, ,D,R)$, where $s\circ t:=s\cdot t$ is defined if and only if $R(s)=D(t)$, is an ordered $($left cancellative$)$ category with restrictions in which $\leq$ is the natural order on $\Q$ as a normal constellation, and for $e\leq D(s)$, we have that $e|s=e\cdot s$.

Conversely, if $\C=(C,\, \circ\, ,D,R,\leq)$ is a $($left cancellative$)$ ordered category with restrictions, then setting $s\cdot t$ equal to $s\circ (R(s)|t)$ whenever $R(s)\leq D(t)$ makes $(C,\, \cdot\, ,D,R)$ into a $($left cancellative$)$ constellation with range, and the given partial order is nothing but the natural order on the constellation.  
\end{result} 

If $\Q=(Q,\cdot\, ,D,R)$ is a constellation with range and $\C=(C,\circ ,D,R,\leq)$ is an ordered category with restrictions, denote by ${\bf C}(\Q)$ the ordered category with restrictions $(Q,\circ,D,R,\leq)$ obtained as in Result \ref{corresp} from $\Q$, and denote by ${\bf Q}(\C)$ the constellation with range $(C,\circ,D,R)$ obtained as in that result from the ordered category with restrictions ${\mc C}$.  These two constructions are mutually inverse.

\begin{result}  \label{1to1}
Let ${\mc C}$ be an ordered category with restrictions and ${\mc Q}$ a constellation with range.  Then 
${\bf C}({\bf Q}({\mc C}))={\mc C}$ and ${\bf Q}({\bf C}({\mc Q}))={\mc Q}$.
\end{result}

The class of constellations with range is a category if arrows are taken to be range radiants, and the class of ordered categories with restrictions is itself a category in which the arrows are functors $\rho$ that are order-preserving, which means that $s\leq t$ implies $s\rho\leq t\rho$. 

\begin{result}  \label{equiv}
The category with objects ordered categories with restrictions and arrows being order preserving radiants is isomorphic to the category with objects  constellations with range and  arrows being range radiants.
\end{result}

\subsection{The D-inverse constellation case}

\begin{pro}  \label{isopro}
Every D-inverse constellation $\Q$ is a constellation with range in which $R(s)=s'\cdot s=D(s')$ for all $s\in \Q$, which is right cancellative,  and  left cancellative as a constellation with range.
\end{pro}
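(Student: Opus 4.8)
The statement has three parts: that $\Q$ is a constellation with range with $R(s)=s'\cdot s=D(s')$, that it is right cancellative, and that it is left cancellative as a constellation with range. The plan is to establish the range structure first, then deduce the two cancellativity properties as short corollaries, with the congruence condition in the definition of range being the main piece of work. Throughout, the recurring device is that $s\cdot s'=D(s)$ and $s'\cdot s=D(s')$ both exist, so that (Const2) repeatedly licenses the formation of triple products which (Const1) then reassociates.

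\emph{Identifying the range.} Set $R(s):=s'\cdot s=D(s')$, a projection by definition of the D-inverse, and verify that it is the unique element of $s_D$. Existence of $s\cdot R(s)$ is immediate: since $s\cdot s'$ exists, Result \ref{2p3} gives that $s\cdot D(s')=s\cdot R(s)$ exists (indeed it equals $s$, as $s\cdot(s'\cdot s)=(s\cdot s')\cdot s=s$). To see that $R(s)$ is a lower bound, I would take $f\in D(\Q)$ with $s\cdot f$ existing; then since $s'\cdot s$ also exists, (Const2) gives existence of $s'\cdot(s\cdot f)$ and (Const1) gives existence of $(s'\cdot s)\cdot f=R(s)\cdot f$, which is precisely $R(s)\leq f$ in the standard order. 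As $\Q$ is normal (Corollary \ref{corDinv}), this order is antisymmetric (Result \ref{parnormal}), so $s_D$ has at most one element; hence $s_D=\{R(s)\}$.

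\emph{The congruence condition.} This is the step I expect to be the main obstacle. I must show that whenever $s\cdot t$ exists, $R(s\cdot t)=R(R(s)\cdot t)$. Writing $u=s\cdot t$, the same (Const1)/(Const2) manipulation as above gives $R(s)\cdot t=(s'\cdot s)\cdot t=s'\cdot u$, and conversely $s\cdot(s'\cdot u)=(s\cdot s')\cdot u=D(s)\cdot u=u$ (using $D(u)=D(s)$ from Result \ref{2p3}). Since $R(v)$ is characterised as the least projection $g$ for which $v\cdot g$ exists, it suffices to show that $u\cdot g$ and $(s'\cdot u)\cdot g$ exist for exactly the same $g\in D(\Q)$. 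Both implications follow by pre- or post-composing with $s$ or $s'$: if $u\cdot g$ exists then so does $s'\cdot(u\cdot g)=(s'\cdot u)\cdot g$, and if $(s'\cdot u)\cdot g$ exists then so does $s\cdot((s'\cdot u)\cdot g)=(s\cdot(s'\cdot u))\cdot g=u\cdot g$. Equality of these two defining sets forces equality of their least elements, i.e. $R(u)=R(s'\cdot u)=R(R(s)\cdot t)$.

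\emph{Cancellativity.} Both properties are now short. Right cancellativity holds because a D-inverse constellation is D-regular, as $s\cdot s'=D(s)$ witnesses for each $s$, so Proposition \ref{monic} applies directly. For left cancellativity as a constellation with range, suppose $a\cdot b=a\cdot c$. Using the identity above, $R(a)\cdot b=(a'\cdot a)\cdot b=a'\cdot(a\cdot b)$ and likewise $R(a)\cdot c=a'\cdot(a\cdot c)$; since $a\cdot b=a\cdot c$ these coincide, giving $R(a)\cdot b=R(a)\cdot c$ as required. The only care needed anywhere is to confirm existence of each bracketed product before reassociating it, which in every case is supplied by (Const2) together with the existence of $s'\cdot s$ or $s\cdot s'$.
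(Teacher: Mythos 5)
Your proof is correct, and its overall skeleton (establish the pre-range structure first, then the congruence condition, then the two cancellativity claims) matches the paper's: the identification of $R(s)=s'\cdot s=D(s')$ as the unique element of $s_D$ via normality, the appeal to Proposition \ref{monic} through D-regularity for right cancellativity, and the computation $R(a)\cdot b=(a'\cdot a)\cdot b=a'\cdot(a\cdot b)$ for left cancellativity are essentially what the paper does. Where you genuinely diverge is the congruence condition, which is indeed the crux. The paper proves it by invoking Result \ref{Rst} (in particular the inequality $R(x\cdot y)\leq R(y)$, valid in any constellation with pre-range) twice, producing the sandwich $R(s\cdot t)\leq R(R(s)\cdot t)\leq R(s\cdot t)$ and concluding by normality. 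You instead set $u=s\cdot t$, observe $R(s)\cdot t=s'\cdot u$ and $s\cdot(s'\cdot u)=u$, and then show the sets $\{g\in D(\Q)\mid u\cdot g\mbox{ exists}\}$ and $\{g\in D(\Q)\mid (s'\cdot u)\cdot g\mbox{ exists}\}$ coincide by pre-composing with $s'$ or $s$; since $R$ of each element is the (unique, by normality) least element of its set, equal sets force $R(u)=R(s'\cdot u)=R(R(s)\cdot t)$. Your route is more self-contained: it needs only (Const1), (Const2), Result \ref{2p3} and the least-element characterisation of $R$, never appealing to the cited Result \ref{Rst}, at the cost of verifying the two existence implications by hand; the paper's route is shorter on the page but outsources that work to \cite{constgen}. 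Note also that your set-equality observation is formally stronger than the pair of inequalities the paper uses, so both arguments are sound and yours could even be quoted where the finer information is wanted.
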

\begin{proof}  Let $\Q$ be a D-inverse constellation and define $R(s)$ as in the statement of the proposition, for each $s\in \Q$.  Now $R(s)=s'\cdot s=D(s')\in D(\Q)$ and $s\cdot R(s)=s\cdot (s'\cdot s)=(s\cdot s')\cdot s=D(s)\cdot s=s$.  Suppose that $s\cdot e$ exists for some $e\in D(S)$.  Then $s'\cdot(s\cdot e)=(s'\cdot s)\cdot e=R(s)\cdot e$ exists, and so $R(s)\leq e$.  Since $\Q$ is normal, $\leq$ is a partial order, and so $\Q$ is a constellation with pre-range.  

If $s\cdot t$ (equivalently, by Result \ref{Rst}, $R(s)\cdot t$) exists, then $s'\cdot (s\cdot t)=(s'\cdot s)\cdot t$ exists.  Hence by Result \ref{Rst}, 
\[R(s\cdot t)=R((s\cdot R(s))\cdot t)=R(s\cdot (R(s)\cdot t))\leq R(R(s)\cdot t)\]\[=R((s'\cdot s)\cdot t)=R(s'\cdot (s\cdot t))\leq R(s\cdot t).\]
By normality, all are equal and so in particular $R(s\cdot t)=R(R(s)\cdot t)$.  Hence $\Q$ satisfies the congruence condition and so is a constellation with range.

From Proposition~\ref{monic}, $\Q$ is right cancellative.   If $a,b,c\in \Q$ and $a\cdot b=a\cdot c$ then since $a'\cdot a=R(a)$ where $a'$ is the D-inverse of $a$, we have that 
\[R(a)\cdot b=(a'\cdot a)\cdot b= a'\cdot (a\cdot b)=a'\cdot (a\cdot c)= (a'\cdot a)\cdot c=R(a)\cdot c\]
where all products exist by (Const2).   So $\Q$ is left cancellative as a constellation with range.
\end{proof}

A D-inverse constellation that is a category is nothing but a groupoid, as is easily seen. 
 
\begin{dfn}
If $\Pcon=(P,\, \cdot\, ,D,R)$ is a constellation with range, we say it is {\em strongly right cancellative} if $(P,\, \cdot\, )$ is right cancellative and satisfies the condition that for all $e,f\in D(P)$ and $s\in P$, if $R(e\cdot s)=R(f\cdot s)$, then $e=f$.  
\end{dfn}

\begin{pro}  \label{isopro2}
When viewed as a constellation with range as in Proposition \ref{isopro}, the D-inverse constellation $\Q$ is strongly right cancellative.
\end{pro}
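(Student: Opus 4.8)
The plan is to note that right cancellativity of $(Q,\cdot)$ is already established in Proposition~\ref{isopro}, so only the second clause of strong right cancellativity needs proof. Fix $e,f\in D(\Q)$ and $s\in Q$ for which $e\cdot s$ and $f\cdot s$ exist, and set $u=e\cdot s$, $v=f\cdot s$ and $g=R(u)=R(v)$. Since $D(u)=e$ and $D(v)=f$, it suffices to prove that $u=v$, for then $e=D(u)=D(v)=f$.

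First I would record the existence facts needed. As $e\cdot s$ exists we have $e\leq D(s)$ and $e\cdot D(s)=e$ by Result~\ref{2p3}; and by Result~\ref{Rst}, $g=R(u)\leq R(s)=D(s')$, so the product $g\cdot s'$ exists. The heart of the argument is to show that $u'=g\cdot s'$, i.e.\ that $u'$ is determined by $g$ and $s$ alone. To this end I would compute $u\cdot(g\cdot s')$: using $u\cdot R(u)=u$ (so that $u\cdot g=u$) together with $s\cdot s'=D(s)$ and repeated associativity via (Const1)/(Const2), this collapses to $u\cdot(g\cdot s')=(u\cdot g)\cdot s'=u\cdot s'=(e\cdot s)\cdot s'=e\cdot(s\cdot s')=e\cdot D(s)=e=D(u)$.

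Since also $u\cdot u'=D(u)$, we obtain $u\cdot(g\cdot s')=u\cdot u'$. Now I would apply left cancellativity of $\Q$ as a constellation with range (again from Proposition~\ref{isopro}): cancelling $u$ gives $R(u)\cdot(g\cdot s')=R(u)\cdot u'$, that is $g\cdot(g\cdot s')=g\cdot u'$. The left side reduces to $g\cdot s'$ (as $g\cdot g=g$) and the right side to $u'$ (as $D(u')=R(u)=g$), whence $u'=g\cdot s'$. Running the identical computation for $v$ yields $v'=g\cdot s'$ with the same $g$, so $u'=v'$ and therefore $u=v$ because $'$ is an involution. This gives $e=f$, as required.

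I expect the main obstacle to be the bookkeeping of which products are defined — in particular establishing $g\leq R(s)$ so that $g\cdot s'$ exists, and justifying each associativity step — together with the key idea of using left cancellativity to express $u'$ (hence $u$) solely in terms of $g$ and $s$. An alternative, perhaps cleaner, route would be to invoke the embedding $\rho$ of Theorem~\ref{normiso} of $\Q$ into $\I_Q$: since $R(s)=s'\cdot s$ and $\rho$ preserves both $\cdot$ and $'$, it preserves $R$, so the hypothesis $R(e\cdot s)=R(f\cdot s)$ transfers to $\I_Q$; one then checks directly that $\I_Q$ is strongly right cancellative (for one-to-one partial maps, equal ranges $s(A)=s(B)$ force $A=B$) and reflects this back along the strong injective radiant $\rho$.
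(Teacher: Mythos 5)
Your main argument is correct, and it arrives at the same key identity as the paper's own proof---namely that $(e\cdot s)'$ equals $R(e\cdot s)\cdot s'$, so that equal ranges force equal D-inverses---but by a different mechanism. The paper obtains this identity by bare-hands associativity: since $(e\cdot s)\cdot s'=e\cdot(s\cdot s')=e\cdot D(s)=e$ and $e$ is a right identity, one has $(e\cdot s)'=(e\cdot s)'\cdot((e\cdot s)\cdot s')=((e\cdot s)'\cdot(e\cdot s))\cdot s'=R(e\cdot s)\cdot s'$; no cancellation is used until the very end, where from $e\cdot s=f\cdot s$ it invokes right cancellativity (Proposition~\ref{monic}) to conclude $e=f$. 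You instead compute $u\cdot(g\cdot s')=D(u)=u\cdot u'$ and cancel $u$ on the left using the left cancellativity established in Proposition~\ref{isopro}, and you finish not with right cancellation but by applying $D$: $e=D(u)=D(v)=f$, via Result~\ref{2p3}. Both mechanisms are sound, and all your existence claims ($g\le R(s)=D(s')$ from Result~\ref{Rst}, the (Const1)/(Const2) manipulations, $g\cdot u'=D(u')\cdot u'=u'$, and $u=u''$ by uniqueness of D-inverses) check out. Your route leans on a prior structural result where the paper computes directly, and your ending is slightly more economical, in that the second clause of strong right cancellativity is obtained without appealing to the first.

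One caveat concerning your proposed alternative route: Theorem~\ref{normiso} is a representation theorem for \emph{small} constellations, whereas Proposition~\ref{isopro2} concerns arbitrary D-inverse constellations, which (like all constellations in this paper) may be proper classes. So the embedding of $\Q$ into $\I_Q$ proves the statement only in the small case---precisely the qualification the paper itself makes after Proposition~\ref{sands}. Your direct argument, by contrast, is fully general and is the one to keep.
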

\begin{proof}
Suppose that $s\in \Q$ and $e,f\in D(\Q)$ are such that $R(e\cdot s)=R(f\cdot s)$. Then  $(e\cdot s)'\cdot (e\cdot s)=(f\cdot s)'\cdot (f\cdot s)$.  But $s\cdot s'=D(s)$ exists, and so $(e\cdot s)\cdot s'=e\cdot (s \cdot s')=e\cdot D(s)=e$;  similarly, $(f\cdot s)\cdot s'=f$.  Hence 
$(e\cdot s)'\cdot ((e\cdot s)\cdot s')=(e\cdot s)'\cdot e=(e\cdot s)'$ and similarly $(f\cdot s)'\cdot ((f\cdot s)\cdot s')=(f\cdot s)'$.  So 
\bea
(e\cdot s)'&=&(e\cdot s)'\cdot ((e\cdot s)\cdot s')\\
&=&((e\cdot s)'\cdot (e\cdot s))\cdot s'\\
&=&((f\cdot s)'\cdot (f\cdot s))\cdot s'\\
&=&(f\cdot s)'\cdot ((f\cdot s)\cdot s')\\
&=&(f\cdot s)'. 
\eea 
It follows that $e\cdot s=f\cdot s$ and, by the right cancellative property, $e=f$.  So  $\Q$ is strongly right cancellative as a constellation with range.
\end{proof}

\begin{eg}  \label{counter2}
Let $X=\{x,y,z\}$ and $P=\{f,i,g,a,b\}\subseteq C_X$, with $f=\{(x,x)\}$, $i=\{(x,x),(y,y)\}$, $g=\{(z,z)\}$, $a=\{(x,z)\}$ and $b=\{(x,z),(y,z)\}$.  It is easy to see that $\mathcal{P}=(P,\cdot,D,R)$ is a subconstellation of  $\C_X=(C_X,\, \cdot\, ,D,R)$ that is also closed under range and hence is a constellation with range itself, moreover one that satisfies the congruence condition since $\C_X$ does.  
Moreover if $s\cdot u=t\cdot u$ for any $s,t,u\in P$, if $u\in D(P)$ then obviously $s=t$, but if not then $u\in \{a,b\}$ and $s,t\in D(P)$ and so $s=D(s)=D(s\cdot u)=D(t\cdot u)=D(t)=t$.  
However, $R(f\cdot b)=R(a)=g=R(b)=R(i\cdot b)$, yet $f\neq i$.  So even for constellations with range satisfying the congruence condition, the second part of the definition of being strongly right cancellative is independent of the right cancellative law.\end{eg}

Every D-inverse constellation is D-regular as a constellation, as well as being a constellation with range.  Conversely, we have the following.  

\begin{thm}  \label{main}
Let $\Q$ be a D-regular constellation with range.  Then $\Q$ is a D-inverse constellation, and $R(s)=D(s')$  for all $s\in \Q$.
\end{thm}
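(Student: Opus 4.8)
The plan is to prove existence of a D-inverse for each element and then use normality to upgrade this to genuine D-inverseness. Since $\Q$ is a constellation with range it is in particular a constellation with pre-range, hence normal by Result~\ref{Rlaws}. By Corollary~\ref{corDinv} it therefore suffices to produce, for each $a\in Q$, some element that is a D-inverse of $a$. D-regularity supplies an element $b$ with $a\cdot b=D(a)$, but $b$ itself need not be a D-inverse: there is no control over $R(b)$, so $b\cdot a$ may fail even to exist. The key idea is to \emph{trim} $b$ to the element $c:=R(a)\cdot b$, whose domain and range can be pinned down exactly.

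First I would record the data of $c$. Since $a\cdot b$ exists, Result~\ref{Rst} guarantees $R(a)\cdot b=c$ exists. As $R(a)$ is a projection, $D(c)=D(R(a)\cdot b)=R(a)$ by Result~\ref{2p3}. For the range, the congruence condition applied to $a\cdot b$ gives $R(c)=R(R(a)\cdot b)=R(a\cdot b)=R(D(a))=D(a)$, the last step being the routine fact that $R$ fixes projections. Using $a\cdot R(a)=a$ (since $R(a)$ is a right identity and $a\cdot R(a)$ is defined) together with (Const2) and (Const1), the product $a\cdot c=a\cdot(R(a)\cdot b)=(a\cdot R(a))\cdot b=a\cdot b=D(a)$ exists and equals $D(a)$. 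This settles one of the two D-inverse equations.

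The hard part will be the second equation $c\cdot a=D(c)$. Because $R(c)=D(a)$, the product $c\cdot a$ exists by Result~\ref{Rst}. I would then re-multiply on the right by $c$: since $c\cdot a$ and $a\cdot c$ both exist, (Const2) and (Const1) yield that $(c\cdot a)\cdot c=c\cdot(a\cdot c)=c\cdot D(a)$ exists, and as $R(c)=D(a)$ we get $c\cdot D(a)=c\cdot R(c)=c$. On the other hand $D(c)\cdot c=c$ as well, so $(c\cdot a)\cdot c=D(c)\cdot c$. Right cancellativity of $\Q$ (Proposition~\ref{monic}, available precisely because $\Q$ is D-regular) now cancels the factor $c$, giving $c\cdot a=D(c)$. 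Hence $c$ is a D-inverse of $a$; as every element has one and $\Q$ is normal, Corollary~\ref{corDinv} shows $\Q$ is a D-inverse constellation. This re-multiplication step is the main obstacle: the equation is not reachable by direct associative manipulation, and the trick is to turn it into a right cancellation, which is where D-regularity is invoked a second time.

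Finally, the identity $R(s)=D(s')$ comes essentially for free. D-inverses are unique by normality, so $a'=c=R(a)\cdot b$, and we computed $D(c)=R(a)$; thus $D(a')=R(a)$ for every $a$. (Equivalently, once $\Q$ is known to be D-inverse, Proposition~\ref{isopro} already gives $R(s)=D(s')$, the two range operations agreeing by uniqueness of the range.)
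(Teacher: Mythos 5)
Your proof is correct, and its skeleton coincides with the paper's: both take the very same candidate inverse $c=R(a)\cdot b$ (the paper's $t'=R(s)\cdot t$), both verify $a\cdot c=D(a)$ by the same one-line associativity computation $a\cdot(R(a)\cdot b)=(a\cdot R(a))\cdot b=a\cdot b=D(a)$, and both finish via normality (Result~\ref{Rlaws}) and Corollary~\ref{corDinv}. Where you genuinely diverge is in the hard step $c\cdot a=D(c)$. The paper invokes D-regularity a second time to produce a witness $u$ with $t\cdot u=D(t)$, deduces $s=D(s)\cdot u$, and then computes $t'\cdot s=(R(s)\cdot t)\cdot(D(s)\cdot u)=\cdots=R(s)$ by a chain of associativity rearrangements. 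You instead observe that $(c\cdot a)\cdot c=c\cdot(a\cdot c)=c\cdot D(a)=c\cdot R(c)=c=D(c)\cdot c$ and cancel the right-hand factor $c$ using Proposition~\ref{monic}, right cancellativity of D-regular constellations. This is arguably cleaner: the second appeal to D-regularity is packaged inside an already-proved lemma rather than repeated ad hoc, and the computation is shorter. The two arguments also differ mildly in how existence of $c\cdot a$ is secured: the paper shows $R(t')=R(s\cdot t)$ by the congruence condition and notes that $R(s\cdot t)\cdot s$ exists, whereas you compute $R(c)=D(a)$ outright (congruence condition plus the fact that $R$ fixes projections --- a routine fact, but note it does rely on normality, so it is worth flagging Result~\ref{Rlaws} at that point) and then apply Result~\ref{Rst}; these are equivalent uses of the same two ingredients. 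Your closing derivation of $R(s)=D(s')$ from $D(c)=R(a)$ and uniqueness of D-inverses matches the paper's conclusion.
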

\begin{proof}
For $s\in \Q$, let $t\in \Q$ be such that $s\cdot t=D(s)$, and let $t'=R(s)\cdot t$, which exists by Result \ref{Rst}.  Let $u$ be such that $t\cdot u=D(t)$.  Then there exists $s\cdot(t\cdot u)=s\cdot D(t)=s$, and we also have $s=(s\cdot t)\cdot u=D(s)\cdot u$.  Since $D(s)\cdot s$ exists, $(s\cdot t)\cdot s$ exists, hence so too does $R(s\cdot t)\cdot s$ by Result \ref{Rst}. By the congruence condition, $R(t')=R(R(s)\cdot t)=R(s\cdot t)$, so $R(t')\cdot s$ and hence $t'\cdot s$ exists, again by Result \ref{Rst}.  
Then
\bea
t'\cdot s&=&(R(s)\cdot t)\cdot (D(s)\cdot u)\\
&=&((R(s)\cdot t)\cdot D(s))\cdot u\\
&=&(R(s)\cdot t)\cdot u\\
&=&R(s)\cdot (t\cdot u)\mbox{ since $t\cdot u$ exists}\\
&=&R(s)\cdot D(t)\\
&=&R(s).
\eea
We have that $s\cdot t'=s\cdot(R(s)\cdot t)=(s\cdot R(s))\cdot t=s\cdot t=D(s)$ and, as just shown, $t'\cdot s=R(s)$, so $D(t')=D(t'\cdot s)=R(s)$.  Hence $t'$ is a D-inverse of $s$, so $\Q$ is D-regular.  Moreover, $\Q$ is normal by Result \ref{Rlaws}, and so by Corollary \ref{corDinv}, $\Q$ is D-inverse.
\end{proof} 

We have that $\C_X$ is a constellation with range, but it is not D-regular (since it is not even right cancellative as a constellation, as easy examples show).  However, it is not hard to check that for all $a\in C_X$ there exists $b\in C_X$ for which $b\cdot a=R(a)$.  

Because they are constellations with range, D-inverse constellations will correspond to some types of ordered categories with restrictions by Result~\ref{1to1}. First we need to clarify the notion of radiant specific to D-inverse constellations; in fact from the discussion in Subsection~\ref{sub:dinv} we may simply take it to be a  radiant. (An analogous result is familiar for inverse semigroups.)

\begin{thm}  \label{invcorresp} The category with objects D-inverse constellations and  radiants as arrows is isomorphic to the category with objects ordered groupoids and  order-preserving functors as arrows, via the restriction of the functors as in Result \ref{equiv}.
\end{thm}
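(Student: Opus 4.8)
The plan is to realise the claimed isomorphism simply as the restriction of the isomorphism of Result~\ref{equiv} to suitable full subcategories on each side: D-inverse constellations sitting inside constellations with range (via Proposition~\ref{isopro}), and ordered groupoids sitting inside ordered categories with restrictions. Since the constructions $\mathbf{C}$ and $\mathbf{Q}$ of Result~\ref{corresp} are mutually inverse by Result~\ref{1to1}, it suffices to establish two things. At the level of objects, I would show that a constellation $\Q$ is D-inverse if and only if the associated ordered category with restrictions $\mathbf{C}(\Q)$ is an ordered groupoid. At the level of arrows, I would show that the radiants between D-inverse constellations are \emph{exactly} the range radiants between the corresponding constellations with range, and that these in turn correspond under Result~\ref{equiv} to the order-preserving functors between the associated ordered groupoids.

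For the object correspondence I would argue both directions. Given $\Q$ D-inverse, Proposition~\ref{isopro} makes it a constellation with range with $R(s)=D(s')$, so $\mathbf{C}(\Q)$ is an ordered category with restrictions. Because $R(s)=D(s')$, the composite $s\circ s'$ is defined and equals $s\cdot s'=D(s)$, and likewise $s'\circ s=s'\cdot s=R(s)$; hence $\mathbf{C}(\Q)$ is a groupoid whose categorical inverse is precisely the D-inverse. Conversely, given an ordered groupoid $\C$ (which is in particular an ordered category with restrictions), $\mathbf{Q}(\C)$ is a constellation with range, and since $D(s')=R(s)$ the restriction $R(s)|s'$ is just $s'$, giving $s\cdot s'=s\circ s'=D(s)$; thus $\mathbf{Q}(\C)$ is D-regular, and Theorem~\ref{main} then yields that it is D-inverse. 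This leaves one nontrivial verification in the forward direction: the axiom OG1, namely that the natural order is compatible with the D-inverse, as every other ordered-groupoid axiom is inherited free of charge from Result~\ref{corresp}.

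To verify OG1, suppose $s\leq t$ in $\Q$, say $s=D(s)\cdot t$, and set $w=R(s)\cdot t'$, which exists because $R(s)\leq R(t)=D(t')$ by monotonicity of $R$ (law \ref{DRmono} for $\mathbf{C}(\Q)$). Using (Const1), (Const2), and $t\cdot t'=D(t)$ together with $D(s)\leq D(t)$, one computes $s\cdot w=s\cdot t'=(D(s)\cdot t)\cdot t'=D(s)$. Left-multiplying by $s'$ and using $s'\cdot D(s)=s'$ (valid since $R(s')=D(s)$) gives $s'=s'\cdot(s\cdot w)=(s'\cdot s)\cdot w=R(s)\cdot w=w$. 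Hence $s'=R(s)\cdot t'=D(s')\cdot t'$, that is $s'\leq t'$, which is OG1; so $\mathbf{C}(\Q)$ is an ordered groupoid.

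For the arrows, a radiant $\rho$ between D-inverse constellations preserves $D$ and, as noted just before Proposition~\ref{Dregim2}, preserves D-inverses, so $R(s\rho)=D((s\rho)')=D(s'\rho)=R(s)\rho$; thus $\rho$ is automatically a range radiant, and conversely every range radiant between them is a radiant, so the two arrow classes coincide. Under Result~\ref{equiv} range radiants correspond to order-preserving functors, and since any functor between groupoids preserves inverses, these are exactly the order-preserving functors of ordered groupoids. Assembling these pieces, the isomorphism of Result~\ref{equiv} restricts to the desired isomorphism. The main obstacle I anticipate is precisely the forward-direction check of OG1: the cleanest route is to guess the candidate $R(s)\cdot t'$ for $s'$, confirm only that $s\cdot w=D(s)$, and then \emph{recover} $w=s'$ by left-multiplication, thereby sidestepping the fiddlier direct verification that $w\cdot s=D(w)$.
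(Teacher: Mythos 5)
Your proposal is correct and takes essentially the same route as the paper's own proof: restricting the isomorphism of Result \ref{equiv}, showing ${\bf C}(\Q)$ is an ordered groupoid with the only real work being (OG1) (which you verify by essentially the same chain of identities, merely reorganised around the candidate $w=R(s)\cdot t'$ rather than first proving $s\cdot t'=s\cdot s'$), showing conversely that ${\bf Q}(\C)$ is D-inverse, and handling arrows via the observation before Proposition \ref{Dregim2} that radiants between D-inverse constellations preserve D-inverses. The only cosmetic difference is in the converse, where the paper checks both D-inverse equations directly and cites normality (Proposition \ref{normunique}) for uniqueness, whereas you check only D-regularity and invoke Theorem \ref{main}; both are valid given the results already established.
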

\begin{proof}
Suppose that $\Q$ is a D-inverse constellation.  Then viewing it as a constellation with range, ${\bf C}(\Q)$ is an ordered category with restrictions.  Now $D(s)=s\cdot s'$ and $D(s')=s'\cdot s$ in $\Q$, where $s'$ is the D-inverse of $s$ in $\Q$.  But $R(s)=D(s')$ and $R(s')=D(s^{\prime\prime})=D(s)$ by uniqueness, so $s\cdot s'=s\circ s'$ and $s'\cdot s=s'\circ s$ in ${\bf C}(\Q)$.  

It remains to show that (OG1) holds.  Suppose that $s,t\in {\bf C}(\Q)$, with $s',t'$ being the D-inverses of $s,t$ in $\Q$.  Suppose that
 $s\leq t$.  Then $s=D(s)|t$ with $D(s)\leq D(t)$, so $s=D(s)\cdot t=(s\cdot s')\cdot t$ in $\Q$. As $D(s)\leq D(t)$, so $(s\cdot s')\cdot (t\cdot t')$ exists and equals $s\cdot s'$. Then
$$s\cdot s'=(s\cdot s')\cdot (t\cdot t')=((s\cdot s')\cdot t)\cdot t'=s\cdot t',$$
which therefore exists. So $R(s)\cdot t'=D(s')\cdot t'$ exists and equals
$$(s'\cdot s)\cdot t'=s'\cdot (s\cdot t')=s'\cdot (s\cdot s')=(s'\cdot s)\cdot s'=D(s')\cdot s'=s',$$
so $s'\leq t'$.  We conclude that ${\bf C}(\Q)$ is an ordered groupoid.
 
For the converse, suppose that $\C$ is an ordered groupoid.  Then it is an ordered category with restrictions, so that ${\bf Q}(\C)$ is a constellation with range.  Moreover for $s\in {\bf Q}(\C)$, if $s'$ is its inverse in $C$, then $D(s)=R(s')$, so in $\C$ and hence in ${\bf Q}(\C)$, $s\cdot s'=s\circ s'=D(s)$ and $s'\cdot s=s'\circ s=R(s)=D(s')$, and so $s'$ is a D-inverse of $s$ in ${\bf Q}(\C)$, which is unique by normality (Proposition \ref{normunique}).  Hence ${\bf Q}(\C)$ is a D-inverse constellation.

The remainder of the details of the correspondence now follow from Result~\ref{equiv}, also using the comment before Proposition \ref{Dregim2}.
\end{proof}

In light of this result, Theorem \ref{normiso} follows from the analogous fact for ordered groupoids; see Theorem 9 in Section $4.1$ of \cite{lawsoninverse} for example.  Our proof is rather shorter than that of the corresponding result in \cite{lawsoninverse}.

Since inductive groupoids are precisely ordered groupoids in which the projections form a semilattice, the ESN theorem yields  the following immediate consequence of Theorem \ref{invcorresp}.

\begin{cor}  \label{corinv}
The category of D-inverse constellations in which $D(\Q)$ is a meet-semilattice under its natural order is isomorphic to the category of inverse semigroups.
\end{cor}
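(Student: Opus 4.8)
The plan is to derive the corollary by composing two isomorphisms of categories: the isomorphism ${\bf C}$ of Theorem~\ref{invcorresp}, between D-inverse constellations and ordered groupoids, and the ESN isomorphism, between inductive groupoids and inverse semigroups. Recall that an inductive groupoid is exactly an ordered groupoid whose identities form a meet-semilattice under the groupoid order. So the task reduces to showing that ${\bf C}$ cuts down to an isomorphism between the subcategory of D-inverse constellations with $D(\Q)$ a meet-semilattice and the subcategory of inductive groupoids; the ESN theorem then completes the chain.

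At the level of objects this is quick. By Results~\ref{corresp} and~\ref{1to1}, the passage $\Q\mapsto{\bf C}(\Q)$ leaves the underlying set, the operation $D$ (hence the set of projections, which are precisely the identities of ${\bf C}(\Q)$) and the natural order unchanged, the groupoid order on ${\bf C}(\Q)$ being exactly the natural order of $\Q$. Hence $D(\Q)$ is a meet-semilattice under its natural order if and only if the identities of ${\bf C}(\Q)$ form a meet-semilattice, that is, if and only if ${\bf C}(\Q)$ is inductive. This yields the required object bijection, which composes with that of the ESN theorem to identify the objects with inverse semigroups.

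The step I expect to be the main obstacle is the matching of morphisms. Theorem~\ref{invcorresp} pairs radiants with order-preserving functors, whereas the ESN theorem pairs semigroup homomorphisms with inductive functors, namely order-preserving functors that in addition preserve meets of identities. These classes genuinely differ: an order-preserving functor between inductive groupoids need not preserve meets, as is already visible when a meet-semilattice is regarded as a D-inverse constellation (every element being its own D-inverse), where the radiants are just the order-preserving maps while the inverse-semigroup homomorphisms are the meet-preserving ones. The resolution is to take the morphisms on the constellation side to be the meet-preserving radiants---those whose restriction to $D(\Q)$ is a homomorphism of meet-semilattices---and then to check that a radiant $\rho$ is meet-preserving exactly when ${\bf C}(\rho)$ is an inductive functor.

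With this identification of morphisms in hand, composing the two isomorphisms gives the stated equivalence, the functoriality and invertibility of the composite being inherited formally from ${\bf C}$ and from the ESN correspondence. I would therefore present the proof as: establish the object correspondence from Results~\ref{corresp} and~\ref{1to1}; establish the morphism correspondence by the meet-preservation criterion above; and then simply quote the ESN theorem to pass from inductive groupoids to inverse semigroups.
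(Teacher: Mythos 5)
Your proposal takes the same overall route as the paper---restrict Theorem~\ref{invcorresp} to the objects whose projections form a meet-semilattice, observe that these correspond exactly to the inductive groupoids, then invoke the ESN theorem---but the paper carries this out in a single sentence, treating the corollary as immediate and saying nothing about morphisms. The extra step you insert at the morphism level is a genuine and necessary refinement, not pedantry. Theorem~\ref{invcorresp} pairs \emph{all} radiants with \emph{all} order-preserving functors, whereas the ESN isomorphism pairs semigroup homomorphisms with \emph{inductive} functors (order-preserving functors whose restriction to identities preserves binary meets), and these classes genuinely differ on the objects at hand: your semilattice example is correct, since a meet-semilattice viewed as a D-inverse constellation (with $e\cdot f=e$ exactly when $e\leq f$, every element its own D-inverse) has as its radiants precisely the order-preserving maps, while its homomorphisms as an inverse semigroup are the meet-preserving maps, and order-preserving non-meet-preserving maps exist already from a four-element diamond onto a three-element chain. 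Consequently the natural functor from inverse semigroups to D-inverse constellations with unrestricted radiants is bijective on objects and faithful but not full, so the corollary holds only when the morphisms on the constellation side are cut down, as you propose, to radiants whose restriction to $D(\Q)$ preserves meets; your remaining check---that this condition transfers verbatim across ${\bf C}$, because by Results~\ref{corresp} and~\ref{1to1} that functor is the identity on underlying sets, projections and order---is trivial and completes the argument. In short, your proof is correct and is, in effect, the paper's proof done carefully; what it buys is a precise (and actually valid) reading of the statement, which in the paper must be understood with ``appropriately defined'' morphisms, in the same sense as its own formulation of the ESN theorem.
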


Starting with an inverse semigroup $S$, we may view it as a D-inverse constellation by setting $D(s)=ss'$ and $s\cdot t=st$ but only when $stt'=s$, as in Proposition \ref{inv2inv}, and then this may be viewed as an ordered groupoid as in Theorem \ref{invcorresp} by retaining $D(s)$, setting $R(s)=D(s')=s's$, setting $s\circ t=st$ whenever $R(s)=D(t)$, which is to say that $s's=tt'$, and the order is given by the natural order, namely $s\leq t$ if and only if $s=D(s)t=ss't$.  On the other hand, beginning with the same inverse semigroup $S$, the usual way to view it as an inductive (hence ordered) groupoid is to define $D(s)=ss'$, $R(s)=s's$, $s\circ t=st$ when $s's=R(s)=D(t)=tt'$, and the order given by $s\leq t$ if $s=ss't$.  Clearly, these two viewpoints coincide.

\section{Generalising constructions of Nambooripad and Lawson}  \label{sec:construct}

There are likely to be many settings in which established results obtained using ordered groupoids may be more elegantly obtained using D-inverse constellations.  It is therefore possible that D-inverse constellations will prove a useful tool in the study of both ordered groupoids and inverse semigroups.  They also provide a perspective through which results for ordered groupoids may be able to be generalised  to constellations.

The ESN Theorem itself was  generalised significantly (although with category isomorphism replaced by category equivalence) when Nambooripad showed in \cite{nambooripad} that the category of regular semigroups is equivalent to the category of ordered groupoids in which $D(\Q)$ is a regular biordered set in a way compatible with its structure as an ordered groupoid.  In light of Theorem \ref{invcorresp}, this fact could instead be stated in terms of D-inverse constellations rather than ordered groupoids.  

Suppose that $S$ is a regular semigroup.  As usual, let $V(a)\subseteq S$ denote the set of inverses of $a\in S$.  As part of his ``ESN theorem" for regular semigroups, Nambooripad showed that $P=\{(s,s')\mid s\in S,s'\in V(s)\}$ is an ordered groupoid, with $D((s,s'))=(ss',ss')$, $R((s,s'))=(s's,s's)$, and $(s,s')\circ (t,t')$ defined if and only if $s's=tt'$ and equal to $(st,t's')$, and $(s,s')\leq (t,t')$ means $s=ss't, s'=t'ss'$ and $ss'=ss'tt'=tt'ss'$.

Now consider an involuted semigroup $S$, meaning a semigroup equipped with an involution, that is, a unary operation $^*$ satisfying $(st)^*=t^*s^*$ and $s^{**}=s$ for all $s,t\in S$.  Let $E^*(S)=\{e\in S\mid e^*=e=e^2\}$, the set of {\em projections} in $S$.  Define $I^*(S)=\{s\in S\mid ss^*s=s\}$, the set of {\em partial isometries} of $S$.  In Section $4.2$ of \cite{lawsoninverse}, Lawson shows that $I^*(S)$ forms an ordered groupoid in which the identities are the projections, $D(s)=ss^*$, $R(s)=s^*s$ and $s\circ t=st$ is defined if and only if $R(s)=D(t)$. Further, $s\leq t$ means $s=D(s)t$ with $D(s)\leq D(t)$ under the usual ordering of idempotents in $E^*(S)=\{ss^*\mid s\in I(S)\}$ given by $e\leq f$ whenever $e=ef(=fe)$. For $e\in E^*(S)$ and $s\in I^*(S)$ with $e\leq D(s)$ we define the restriction $e|s=es$, and dually for corestrictions.  

In what follows, we give a common generalisation of these facts, but using D-inverse constellations rather than ordered groupoids.  The advantage of such an approach is that the order and restriction (and even range operation) need not be considered, only the constellation and domain (partial) operations. 

Let $S$ be a semigroup with $E$ a non-empty subset of $E(S)=\{e\in S\mid e^2=e\}$.  In \cite{GouldZenab}, an element $s\in S$ was said to be  {\em $E$-regular} if there exists $t\in S$ for which $sts=s$ with $st,ts\in E$; if $s$ is $E$-regular, it follows that there is $u\in S$ such that $sus=s$ and $usu=u$, with $su,us\in E$ (simply let $u=tst$).  Call such $u$ an {\em $E$-inverse of $s$}.  (Semigroups in which every element has an $E$-inverse for some $E\subseteq E(S)$ were studied in \cite{Dsemigen}.)  In the case of an involuted semigroup $S$, if $s\in I^*(S)$, then $s^*$ is an $E^*(S)$-inverse of $s$ (since from $ss^*s=s$, it follows that $s^*ss^*=s^*$, and $ss^*,s^*s\in E^*(S)$).
  
Let $R_E(S)$ be the set of $E$-regular elements of $S$, suppose that $T\subseteq R_E(S)$ with $E\subseteq T$, and let 
\[I_E(T)=\{(s,s')\mid s,s'\in T, s'\mbox{ is an $E$-inverse of }s\}.\] 
We define a partial binary operation and two unary operations on $I_E(T)$ as follows:
\bi
\item $(s,s')\cdot (t,t')=(st,t's')$ providing $s=stt'$ and $tt's'=s'$, and undefined otherwise; 
\item $D((s,s'))=(ss',ss')$, $R((s,s'))=(s's,s's)$;
\item $(s,s')'=(s',s)$.
\ei

We say $E$ is {\em $T$-normal} if it is such that, for all $e\in E$, $s\in T$ and $s'\in T$ an $E$-inverse of $s$, if $e=ess'=ss'e$ then $s'es\in E$.  

For example, let $T=S$ be a semigroup in which $E(S)\neq\emptyset$, and then $E=E(S)$ is trivially $T$-normal since $s'es\in E(S)$ if $s'$ is an inverse of $s$ and $e$ is an idempotent for which $ess'=e$.  This recovers Nambooripad's construction.  Lawson's example is the special case in which $S$ is an involuted semigroup, $E=E^*(S)$, and $T=I^*(S)$, the partial isometries of $S$: since $ss^*$ and $s^*s$ are projections, every partial isometry $s$ is $E^*(S)$-regular with inverse $s^*$.  A set of idempotents $E\subseteq E(S)$ for a semigroup $S$ is {\em reduced} if for any $e,f\in E$ we have $ef=f$ if and only if $fe=f$. It is easy to see that if $E$ is reduced, then $E$-inverses, if they exist, are unique. In the case at hand, $E^*(S)$ is reduced. To see this, notice that  for $e,f\in E^*(S)$, we have that $ef=f$ if and only if $fe=f^*e^*=(ef)^*=f^*=f$. Hence $s^*$ is the unique $E^*(S)$-inverse of $S$.  The set $E^*(S)$ is also $T$-normal, since if $s\in T$ and $e\in E^*(S)$ with $e=es^*s$, then $s^*es\in E^*(S)$, as is easily seen.  

\begin{thm}  \label{invconsteg}
Suppose that $S$ is a semigroup containing non-empty $E\subseteq E(S)$, with $E\subseteq T\subseteq R_E(S)$. Then $\mathcal{I}_E(T)=(I_E(T),\cdot,D)$ is a D-inverse constellation with $(s',s)$ the D-inverse of $(s,s')$ for all $(s,s')\in I_E(S)$, if and only if $E$ is $T$-normal.
\end{thm}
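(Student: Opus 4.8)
The statement is an equivalence, and essentially everything turns on analysing a single product. First I record the projections: since $D((s,s'))=(ss',ss')$ with $ss'\in E$, and $D((e,e))=(e,e)$ for each $e\in E$, the projections of $\mathcal{I}_E(T)$ are exactly the pairs $(e,e)$ with $e\in E$. The plan is to prove both implications by examining when the product $(s,s')\cdot(t,t')=(st,t's')$ is defined, which happens precisely when $s=stt'$ and $tt's'=s'$. In the forward direction $T$-normality will be what forces this product to remain a legitimate $E$-inverse pair; in the backward direction a single well-chosen defined product will hand back the $T$-normality conclusion.

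I would begin with the forward direction, assuming $E$ is $T$-normal, the crucial step being closure. When $(s,s')\cdot(t,t')=(st,t's')$ is defined, one gets immediately $(st)(t's')=s(tt's')=ss'\in E$, and together with $ss's=s$ and $s'ss'=s'$ the relations $(st)(t's')(st)=st$ and $(t's')(st)(t's')=t's'$ follow by direct substitution. The one nonroutine point is $(t's')(st)=t's'st\in E$, and this is exactly where $T$-normality enters: putting $e_0=s's\in E$, the defining equations give $e_0\cdot tt'=s'(stt')=s's=e_0$ and $tt'\cdot e_0=(tt's')s=s's=e_0$, so $e_0=e_0(tt')=(tt')e_0$; applying $T$-normality to $t$, its $E$-inverse $t'$, and $e_0$ yields $t'e_0t=t's'st\in E$. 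Thus $t's'$ is an $E$-inverse of $st$, and the product is a valid element of $I_E(T)$ (one must also confirm the components $st,t's'$ lie in $T$, which holds in the intended applications where $T$ is closed under the defined product).

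With closure in hand the remaining axioms are routine. Axioms (Const1) and (Const2) follow from associativity in $S$ once one checks that the definedness conditions line up; for (Const2), for instance, both ways of bracketing reduce to the conditions $a=abb'$ and $bb'a'=a'$. For (Const3), the computation $(ss',ss')\cdot(s,s')=(s,s')$ exhibits the required right identity, and uniqueness is forced since definedness of $(g,g)\cdot(s,s')$ already requires $g=gss'$, whence $gs=s$ gives $g=gss'=ss'$. Normality holds because, for $e,f\in E$, both $(e,e)\cdot(f,f)$ and $(f,f)\cdot(e,e)$ being defined forces $ef=e=fe$ and $ef=f=fe$, hence $e=f$. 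A direct computation gives $(s,s')\cdot(s',s)=(ss',ss')=D((s,s'))$ and $(s',s)\cdot(s,s')=(s's,s's)=D((s',s))$, so $(s',s)$ is a D-inverse of $(s,s')$; by Corollary~\ref{corDinv}, $\mathcal{I}_E(T)$ is D-inverse. For the converse I assume $\mathcal{I}_E(T)$ is a D-inverse constellation and recover $T$-normality: given $e\in E$, $s\in T$ and an $E$-inverse $s'\in T$ with $e=ess'=ss'e$, the product $(e,e)\cdot(s,s')$ has definedness conditions $e=e\cdot ss'$ and $ss'\cdot e=e$, which are exactly the hypotheses, so it is defined and equals $(es,s'e)$. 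As this lies in $I_E(T)$, the element $s'e$ is an $E$-inverse of $es$, and in particular $(s'e)(es)=s'es\in E$, which is precisely the $T$-normality conclusion.

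The main obstacle, and the conceptual core, is the closure step: recognising that $t's'st\in E$ is neither automatic nor a consequence of the other (routine) $E$-inverse relations, but is equivalent to $T$-normality via the choice $e_0=s's$, and that the backward direction is the mirror image of this same observation applied to the minimal product $(e,e)\cdot(s,s')$. The only other point requiring genuine care is confirming that both components of a defined product remain in $T$ rather than merely in $R_E(S)$; by contrast, the verifications of (Const1)--(Const3) are lengthy but entirely mechanical once the definedness bookkeeping is organised.
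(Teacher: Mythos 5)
Your proposal follows essentially the same route as the paper in both directions: the closure step is handled by exactly the paper's argument (your identification of $e_0=s's$ together with the verification $e_0=e_0(tt')=(tt')e_0$ spells out what the paper compresses into ``by the $T$-normality of $E$''), and your converse, via the minimal product $(e,e)\cdot(s,s')$, is the paper's argument word for word. Your computations for normality, for the D-inverse identities, and the reduction of the (Const1)/(Const2) definedness bookkeeping to the conditions $a=a(bb')$ and $(bb')a'=a'$ are all correct.

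There is, however, one genuine gap, in (Const3). You prove existence via $(ss',ss')\cdot(s,s')=(s,s')$ and prove uniqueness only among pairs of the form $(g,g)$ with $g\in E$; but (Const3) demands uniqueness among \emph{all right identities} of $\mathcal{I}_E(T)$, and before the constellation axioms are verified you cannot identify the right identities with the image of $D$ --- your opening sentence records the image of $D$, which is a different thing at this stage. What is missing is the paper's short argument that every right identity is diagonal: if $(s,s')$ is a right identity, then $(s',s)\cdot(s,s')$ exists (since $s'=s'(ss')$ and $(ss')s=s$), hence equals $(s',s)$, forcing $(s',s)=(s's,s's)$ and so $s=s's\in E$. (You also never actually verify that $(e,e)$ \emph{is} a right identity, though that computation is immediate and implicit in calculations you do perform.) Without the diagonal-form step, uniqueness in (Const3) is not established. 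Separately, the membership issue you flag --- that $st$ and $t's'$ must lie in $T$, not merely be mutual $E$-inverses --- is a real subtlety, but note that the paper's own proof does not address it either: it concludes $(st,t's')\in I_E(T)$ from the $E$-inverse relations alone. Your parenthetical retreat to ``intended applications where $T$ is closed under the defined product'' amounts to adding a hypothesis absent from the statement, so on this point you are being more careful than the paper, not less; just be aware that it leaves the theorem as literally stated unproved for arbitrary $T$, by your argument and by the paper's alike.
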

\begin{proof}
Suppose first that $E$ is $T$-normal. Pick $(s,s')\in I_E(T)$.  Since $ss',s's\in E$, $(s,s')'=(s',s)\in I_E(T)$ also, and $(e,e)\in I_E(T)$ for all $e\in E$ since $e$ is an $E$-inverse of itself, so in particular, $D((s,s'))\in I_E(T)$. 

Suppose that $(s,s')\in I_E(T)$ is a right identity element. Certainly  
$(s',s)\in I_E(T)$ and the product $(s',s)\cdot (s,s')$ exists. It follows
that $(s',s)=(s's, s's)$, so that $s=s's\in E$.

Conversely, if $e\in E$ then $(e,e)\in I_E(T)$, and if $(x,x')\cdot (e,e)=(xe,ex')$ exists for some $(x,x')\in I_E(T)$, then $x=xee=xe$ and $x'=eex'=ex'$, so $(x,x')\cdot (e,e)=(xe,ex')=(x,x')$.  Hence $(e,e)$ is a right identity. It follows that the set of right identities of $\mathcal{I}_E(T)$ is given by
 \[\mathcal{RI}(I_E(T))=\{(e,e)\mid e\in E\}=\{D((s,s'))\mid (s,s')\in I_E(T)\}.\]

Choose $(s,s')\in I_E(T)$.  Then $ss'\in E\subseteq I_E(T)$, $(ss',ss')\in I_E(T)$, and  $D((s,s'))\cdot (s,s')$ exists (since $(ss')(ss')=ss'$ in $S$) and equals $(s,s')$.  If $(s,s')=(e,e)\cdot (s,s')=(es,s'e)$ for some right identity $(e,e)$, then $ess'=e$, $ss'e=e$, $es=s$ and $s'e=s'$. Hence $ss'=(es)(s'e)=(ess')e=ee=e$, showing uniqueness.

Now pick $(s,s'),(t,t')\in I_E(T)$ for which $(s,s')\cdot (t,t')$ exists,
so that $s=stt'$ and $tt's'=s'$.  Then $(s,s')\cdot (t,t')=(st,t's')$ and we must show that $(st,t's')\in I_E(T)$. We have that $$(st)(t's')(st)=(stt')(s'st)=s(s'st)=st$$ and $$(t's')(st)(t's')=t's'(stt')s'=t's'ss'=t's',$$ with $(st)(t's')=ss'\in E$.  But also, $(t's')(st)=t'(s's)t\in E$ by the $T$-normality of $E$.  Hence $(s,s')\cdot (t,t')=(ss',t't)\in  I_E(T)$, so this partial operation is well-defined, and moreover $D((s,s')\cdot (t,t'))=(stt's',stt's')=(ss',ss')=D((s,s'))$ from above.

If $(s,s'),(t,t'),(u,u')\in I_E(T)$ with $(s,s')\cdot (t,t')=(st,t's')$ and $(t,t')\cdot (u,u')=(tu,u't')$ both existing, then $s=stt', tt's'=s', tuu'=t$ and $ uu't'=t'$, so $s(tuu't')=stt'=s$ and $(tuu't')s'=tt's'=s'$, giving that $(s,s')\cdot ((t,t')\cdot (u,u'))$ exists.    

If $(s,s')\cdot((t,t')\cdot (u,u'))$ exists then it equals $(s,s')\cdot(tu,u't')=(stu,u't's')$ and we have $t=tuu'$, $uu't'=t'$, $s=s(tu)(u't')=stt'$, and $s'=(tu)(u't')s'=tt's'$, so $(s,s')\cdot (t,t')=(st,t's)$ exists. Further,  $(st)uu'=st$ and $uu't's'=t's'$, and so $((s,s')\cdot(t,t'))\cdot (u,u'))=(st,t's')\cdot (u,u')$ exists and is equal to $(stu,u't's')=(s,s')\cdot((t,t')\cdot (u,u'))$.

Hence, $(I_E(T),\cdot,D)$ is a constellation, and $D(I_E(T))=\{(e,e)\mid e\in E\}$.

 It is easy to check normality. Finally, for all $s\in I_E(T)$, we have $(s,s')\cdot (s',s)$ exists (since $ss's=s$ and $s'ss'=s'$ in $S$) and equals $(ss',ss')=D((s,s'))$, and similarly $(s',s)\cdot (s,s')$ exists and equals $D((s',s))$.  We have completed the verification that $\mathcal{I}_E(T)$ is a D-inverse constellation.

Conversely, suppose that $\mathcal{I}_E(T)$ is a D-inverse constellation as in the statement of the theorem. Let $e\in E$ and suppose that $s'$ is an $E$-inverse of $s$ for which $e=ess'=ss'e$.  It follows that $(e,e)\cdot (s,s')$ exists and equals $(es,s'e)$, so $s'e$ is an $E$-inverse of $es$.  In particular, $s'es=s'e(es)\in E$.  Hence $E$ is $T$-normal.  
\end{proof}

As noted earlier, any semigroup $S$ is such that $E(S)$ is $S$-normal.  Consequently, we obtain the following.

\begin{cor}
Suppose that $S$ is a semigroup and let $R_{E(S)}(S)=\Reg(S)$, the set of regular elements of $S$.  Then  $(I_{E(S)}(\Reg(S)),\cdot\, ,D)$ is a $D$-inverse constellation.
\end{cor}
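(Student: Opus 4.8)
The plan is to recognise this corollary as the special case of Theorem~\ref{invconsteg} obtained by taking $E=E(S)$ and $T=\Reg(S)$. To invoke that theorem I need only check two things: that the inclusions $E\subseteq T\subseteq R_E(S)$ hold, and that $E$ is $T$-normal; the theorem then delivers at once that $(I_{E(S)}(\Reg(S)),\cdot\, ,D)$ is a D-inverse constellation, with $(s',s)$ the D-inverse of $(s,s')$.

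First I would dispose of the inclusions. Every idempotent $e$ satisfies $eee=e$, so $E(S)\subseteq\Reg(S)$. For the identification $\Reg(S)=R_{E(S)}(S)$, note that if $sts=s$ then both $st$ and $ts$ are idempotent (for instance $(st)(st)=(sts)t=st$), so a regular element is $E(S)$-regular and conversely; hence $T=\Reg(S)=R_{E(S)}(S)$, and in particular $E\subseteq T\subseteq R_E(S)$ as required. (If $E(S)=\emptyset$ then $\Reg(S)=\emptyset$ too, so $I_{E(S)}(\Reg(S))$ is empty and vacuously a D-inverse constellation; thus I may assume $E(S)\neq\emptyset$ when applying the theorem, matching its hypothesis that $E$ be non-empty.)

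The one remaining hypothesis is $T$-normality, and here I would appeal to the remark preceding the corollary that $E(S)$ is $S$-normal. Concretely, suppose $e\in E(S)$, $s,s'\in\Reg(S)$ with $s'$ an $E(S)$-inverse of $s$, and $e=ess'=ss'e$; I must show $s'es\in E(S)$. Using $ss'e=e$ and $e^2=e$,
$$(s'es)(s'es)=s'e(ss'e)s=s'e\,e\,s=s'es,$$
so $s'es$ is idempotent, giving $\Reg(S)$-normality of $E(S)$ (and indeed $S$-normality, since the computation never used that $s,s'$ lie in any proper subset of $S$). Since this single idempotency check is the only genuine piece of content — everything else is a matter of unwinding definitions and the inclusions already noted — the corollary follows immediately from Theorem~\ref{invconsteg}. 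There is thus no real obstacle here; the only point requiring any care is confirming $\Reg(S)=R_{E(S)}(S)$ and that the $T$-normality condition is inherited from the ambient $S$-normality down to the subset $T=\Reg(S)$.
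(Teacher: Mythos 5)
Your proposal is correct and follows exactly the paper's route: the corollary is obtained by applying Theorem~\ref{invconsteg} with $E=E(S)$ and $T=\Reg(S)$, the only substantive hypothesis being $T$-normality, which (as the paper notes just before the corollary) holds because $E(S)$ is $S$-normal via the same idempotency computation $(s'es)(s'es)=s'es$ that you carry out. Your verification of $\Reg(S)=R_{E(S)}(S)$ and your handling of the $E(S)=\emptyset$ edge case are just careful unwindings of details the paper leaves implicit.
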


Consider the  case in which $S$ is regular,  that is, $\Reg(S)=S$. Upon translation into the language of ordered groupoids, we recover Nambooripad's construction of an ordered groupoid based on ordered pairs $(s,s')$ from a regular semigroup as in \cite{nambooripad}.  In fact the construction can take place within any semigroup, but only involves its regular elements.

As shown in \cite{Dsemigen}, $E$-inverses in the semigroup $S$ are unique when they exist if and only if $E$ is {\em pre-reduced}, meaning that for all $e,f\in E$, $ef=f$ and $fe=e$ imply $e=f$, and $ef=e$ and $fe=f$ imply $e=f$.  

\begin{cor}  \label{Tnorm}
Suppose that $S$ is a semigroup with $E\subseteq E(S)$, $T\subseteq R_E(S)$ with $E\subseteq T$.
If $E$ is pre-reduced and $T$-normal, then $T$ is a $D$-inverse constellation, where we define $s\cdot t=st$ if and only if $stt'=s$ and $tt's'=s'$, where $t'$ is the unique $E$-inverse of $t$ and similarly for $s$, and the D-inverse of $s$ is its $E$-inverse $s'$.
\end{cor}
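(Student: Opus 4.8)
The plan is to deduce the corollary from Theorem~\ref{invconsteg} by transporting the D-inverse constellation structure on $\mathcal{I}_E(T)$ across a bijection. Since $E$ is $T$-normal, Theorem~\ref{invconsteg} already gives that $\mathcal{I}_E(T)=(I_E(T),\cdot\, ,D)$ is a D-inverse constellation in which $(s',s)$ is the D-inverse of $(s,s')$. The additional hypothesis that $E$ is pre-reduced is precisely what forces $E$-inverses to be unique, as recalled from \cite{Dsemigen} immediately before the statement; thus each $s\in T$ determines a single $E$-inverse $s'$, and the pairs in $I_E(T)$ are determined by their first coordinate.

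First I would define $\phi\colon T\to I_E(T)$ by $\phi(s)=(s,s')$, where $s'$ is the unique $E$-inverse of $s$, and check that it is a bijection with inverse the first-coordinate projection $(s,s')\mapsto s$. Injectivity is immediate, and surjectivity follows because the second coordinate of any $(s,s')\in I_E(T)$ is forced to be the unique $E$-inverse $s'$ of $s$. Well-definedness amounts to the remark that $s'\in T$, which is exactly where one uses that the $E$-inverse is taken within $T$ (as in the motivating cases $T=\Reg(S)$ and $T=I^*(S)$, both of which are closed under $E$-inversion since the $E$-inverse relation is symmetric).

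Next I would verify that $\phi$ is a strong radiant, after which the structure transports directly. For the product: the corollary declares $s\cdot t=st$ to exist precisely when $stt'=s$ and $tt's'=s'$, and this is verbatim the existence condition for $(s,s')\cdot(t,t')$ in $\mathcal{I}_E(T)$; when it holds, Theorem~\ref{invconsteg} shows $(st,t's')\in I_E(T)$, so $t's'$ is the unique $E$-inverse of $st$ and hence $\phi(s\cdot t)=(st,t's')=\phi(s)\cdot\phi(t)$. For the domain operation, $D(\phi(s))=(ss',ss')$, so one identifies $D(s)=ss'$ in $T$ (with $ss'$ its own $E$-inverse), matching $\phi(D(s))$. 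Finally the D-inverse of $s$ corresponds under $\phi$ to the D-inverse $(s',s)=(s,s')'$ of $(s,s')$, whose first coordinate is $s'$; this yields that the D-inverse of $s$ in $T$ is its $E$-inverse $s'$, as claimed. Since being a D-inverse constellation is preserved under isomorphism, transport along $\phi$ shows $T$ is a D-inverse constellation; equivalently one applies Proposition~\ref{Dregim2} to the surjective radiant $\phi^{-1}\colon\mathcal{I}_E(T)\to T$, noting that normality of $T$ transports as well.

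I expect the only genuine work to be the bookkeeping in the last two steps: confirming that the product existence condition stated for $T$ coincides with that of $\mathcal{I}_E(T)$ and that $\phi$ respects $D$, both of which are routine once uniqueness of $E$-inverses is in hand. The one delicate point is the well-definedness of $\phi$, namely that the unique $E$-inverse of an element of $T$ again lies in $T$; this is the place where closure of $T$ under $E$-inversion is needed, and everything else follows formally from Theorem~\ref{invconsteg}.
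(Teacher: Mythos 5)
Your proposal is correct and takes essentially the same route as the paper: the paper's proof also notes that pre-reducedness gives unique $E$-inverses, that $s\leftrightarrow (s,s')$ is then an isomorphism of partial algebras $(T,\cdot)\cong (I_E(T),\cdot)$, and that the conclusion follows by transporting the structure from Theorem~\ref{invconsteg}. The point you single out as delicate---that the unique $E$-inverse of an element of $T$ again lies in $T$---is precisely what the paper asserts (without further argument) in the opening sentence of its proof, so your write-up is, if anything, the more explicit of the two.
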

\begin{proof}
The conditions on $S$ and $T$ ensure that any $s\in T$ has a unique $E$-inverse $s'\in T$.  Then, $(T,\cdot)\cong (I_E(T),\cdot)$ as partial algebras, via the isomorphism $s\leftrightarrow (s,s')$, where $s'$ is the unique $E$-inverse of $s\in S$, as is easily seen.  That $T$ is a D-inverse constellation now follows from Theorem \ref{invconsteg}.
\end{proof}

If $S$ is an involuted semigroup, recall that $I^*(S)$ is the set of partial isometries $T$ in $S$.  As noted in the discussion prior to Theorem \ref{invconsteg}, $E^*(S)$ is reduced, hence pre-reduced, and is $I^*(S)$-normal.  We can now apply Corollary \ref{Tnorm} to recover Theorem 3 in Section $4.2$ in \cite{lawsoninverse}: $I^*(S)$ is a D-inverse constellation, where we define $s\cdot t=st$ if and only if $stt^*=s$, hence it is an ordered groupoid in the way described in \cite{lawsoninverse}.  

Note that in the situation of Corollary \ref{Tnorm}, if $s\cdot t$ exists for some $s,t\in T$, then $st\in T$ has $E$-inverse $t's'\in T$; however, $t'\cdot s'$ may not be defined in the constellation $T$, so the law $(s\cdot t)'=t'\cdot s'$ fails in general (since the right hand side may not exist).

\section{Inverse constellations via pre-constellations}  \label{sec:pre}

The definition of inverse semigroups does not involve domain or range operations.  We conclude by giving a formulation of the D-inverse constellation concept that does not presuppose the existence of a domain operation.  

The following definition first appeared in \cite{demonic}, in a relation algebra setting.

\begin{dfn}
We say that $\Pcon=(P,\cdot)$ is a {\em pre-constellation} if $\cdot$ is a partial binary operation that satisfies $($Const1$)$ and $($Const2$)$ in the definition of a constellation.
\end{dfn}

Every semigroup is nothing but a pre-constellation in which all products are defined. 

In a constellation, $RI(\Pcon)=D(\Pcon)\subseteq E(\Pcon)$ as seen in Section~\ref{sec:back}. It follows that $D(\Pcon)$ is determined by the structure of $\Pcon$ as a constellation.  Hence, a pre-constellation is a constellation in at most one way.  In general, $D(\Pcon)$ and $E(\Pcon)$ can be different, as may be easily seen by considering $\C_X$.  However, by Proposition \ref{EPinRIP}, if $\Pcon$ is inverse then $RI(\Pcon)=D(\Pcon)=E(\Pcon)$.

We wish to define a notion of regularity in a pre-constellation.  Recall from Section~\ref{sec:Dreg} that a semigroup is regular if for all $a\in S$ there exists a $b\in S$ such that $a=aba$; in this case, by setting $c=bab$ (where $a=aba$), it follows that $a=aca$ {\em and} $c=cac$.

In the case of pre-constellations, there is ambiguity in the definition of $x\cdot y\cdot x$, so we must take a little care.

\begin{dfn}\label{defn:regp}
The pre-constellation $\Pcon$ is {\em regular} if for all $x\in P$ there is $y\in P$ such that $x=x\cdot(y\cdot x)$.
\end{dfn}

In Definition~\ref{defn:regp}, given $x=x\cdot(y\cdot x)$, we also have
$x=(x\cdot y)\cdot x$.  We then obtain a result generalising the  one for semigroups.

\begin{pro}
The pre-constellation $\Pcon$ is regular if and only if for all $x\in P$ there is $z\in P$ such that $x=x\cdot(z\cdot x)$ and $z=z\cdot (x\cdot z)$.
\end{pro}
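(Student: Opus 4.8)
The plan is to dispatch the easy implication first, and then for the hard direction to imitate the classical semigroup passage from $a=aba$ to the element $c=bab$ satisfying $aca=a$ and $cac=c$. The reverse implication is immediate: if for every $x$ there is $z$ with $x=x\cdot(z\cdot x)$ and $z=z\cdot(x\cdot z)$, then in particular $x=x\cdot(z\cdot x)$, so taking $y=z$ witnesses regularity in the sense of Definition~\ref{defn:regp}.

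For the forward implication, suppose $\Pcon$ is regular and fix $x\in P$ together with a witness $y$, so that $x=x\cdot(y\cdot x)$. Mimicking $c=bab$, I would set $z:=y\cdot(x\cdot y)$. The first task is purely to check that this is a legal product. Since $x\cdot(y\cdot x)$ exists, the inner product $y\cdot x$ exists, and by (Const1) the product $(x\cdot y)\cdot x$ exists, whence $x\cdot y$ exists too; then (Const2) applied to the pair $y\cdot x$, $x\cdot y$ yields existence of $y\cdot(x\cdot y)=z$, and a further application of (Const1) supplies the alternative bracketing $z=(y\cdot x)\cdot y$.

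The heart of the argument is two ``reduction'' identities, $z\cdot x=y\cdot x$ and $x\cdot z=x\cdot y$, each of which collapses a double occurrence of $y$ using the regularity equation. For the first, I would note that $y\cdot\big((x\cdot y)\cdot x\big)=y\cdot x$ exists (because $(x\cdot y)\cdot x=x$), and then reassociate via (Const1) to obtain $\big(y\cdot(x\cdot y)\big)\cdot x=z\cdot x=y\cdot x$. Symmetrically, (Const2) first guarantees that $x\cdot z$ exists, and reassociating $x\cdot\big((y\cdot x)\cdot y\big)$ by (Const1) gives $\big(x\cdot(y\cdot x)\big)\cdot y=x\cdot y$, so $x\cdot z=x\cdot y$. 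With these two identities the conclusions fall out by substitution: $x\cdot(z\cdot x)=x\cdot(y\cdot x)=x$, and, observing that $z\cdot(x\cdot y)$ exists by (Const2) and reassociates by (Const1) to $(z\cdot x)\cdot y=(y\cdot x)\cdot y=z$, we get $z\cdot(x\cdot z)=z\cdot(x\cdot y)=z$.

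The only genuine obstacle is the bookkeeping of definedness: in a pre-constellation there is no ambient total operation, so every reassociation must be licensed by (Const1) and every newly-formed product justified by (Const2), and each displayed equality therefore carries an implicit existence claim that has to be discharged in the right order. Once one is disciplined about using (Const2) to \emph{create} products and (Const1) to \emph{reassociate} them, the computation is exactly the pre-constellation shadow of the familiar $bab$ calculation, and no further ideas are needed.
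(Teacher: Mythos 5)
Your proof is correct and follows essentially the same route as the paper's: both take the classical witness $z = y\cdot(x\cdot y)$ and verify the two identities by (Const2)-licensed existence claims and (Const1) reassociations. The only difference is organisational — you isolate the reduction identities $z\cdot x = y\cdot x$ and $x\cdot z = x\cdot y$ before substituting, while the paper carries out the same manipulations as inline chains of equalities — so nothing of substance separates the two arguments.
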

\begin{proof}  This is a matter of patient calculation. For convenience, we provide the details.  Suppose that $\Pcon$ is regular.  For $x\in P$, choose $y\in P$ such that $x=x\cdot(y\cdot x)=(x\cdot y)\cdot x$, so $x\cdot y,y\cdot x$ both exist.  Hence so does $z=y\cdot(x\cdot y)$.  Moreover,
\bea
x\cdot(z\cdot x)&=&x\cdot ((y\cdot (x\cdot y))\cdot x)\\
&=&x\cdot ((y\cdot ((x\cdot y)\cdot x))\\
&=&x\cdot (y\cdot x)\\
&=&x,
\eea
and, continuing to  make use of (Const1) and (Const2),
\bea
z\cdot (x\cdot z)&=& (y\cdot (x\cdot y))\cdot (x\cdot (y\cdot (x\cdot y)))\\
&=&(y\cdot (x\cdot y))\cdot ((x\cdot y)\cdot (x\cdot y))\\
&=&(y\cdot (x\cdot y))\cdot (((x\cdot y)\cdot x)\cdot y)\\
&=&(y\cdot (x\cdot y))\cdot (x\cdot y)\\
&=&y\cdot ((x\cdot y)\cdot (x\cdot y))\\
&=&y\cdot (x\cdot y)\\
&=&z.
\eea

The converse is immediate.
\end{proof}

Next, we make the obvious definitions.

\begin{dfn}
If $\Pcon$ is a pre-constellation, we say that $t\in P$ is an {\em inverse} of $s\in P$ if $s\cdot(t\cdot s)=s$ and $t=t\cdot (s\cdot t)$, and $\Pcon$ is an {\em inverse pre-constellation} if every element of $\Pcon$ has a unique inverse.
\end{dfn}

So a pre-constellation $\Pcon$ is regular if and only if every element has an inverse, and inverse if every element has a unique inverse.  These definitions specialise back to give the usual semigroup definitions if $\Pcon$  is a semigroup.    

\begin{pro}  \label{regisinv}
Suppose that $\Pcon$  is a regular pre-constellation.  Then it is inverse if and only if for all $e,f\in E(\Pcon )$, if $e=e\cdot (f\cdot e)$ and $f=f\cdot (e\cdot f)$, then $e=f$.
\end{pro}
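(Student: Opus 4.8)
The plan is to prove both implications, the forward one being immediate and the converse requiring care about existence of products. The hypothesis on idempotents is, conceptually, the statement that mutually inverse idempotents must coincide: indeed the two equations $e=e\cdot(f\cdot e)$ and $f=f\cdot(e\cdot f)$ say exactly that $f$ is an inverse of $e$ in the sense of the preceding definition. For the forward implication, suppose $\Pcon$ is inverse and take such $e,f\in E(\Pcon)$. Since $e\cdot e=e$ we have $e\cdot(e\cdot e)=e\cdot e=e$ and $e=e\cdot(e\cdot e)$, so $e$ is an inverse of itself. Thus both $e$ and $f$ are inverses of $e$, and uniqueness of inverses forces $e=f$.

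For the converse, assume the idempotent condition. As $\Pcon$ is regular, every element has at least one inverse, so it suffices to prove uniqueness. Suppose $s$ has inverses $t_1$ and $t_2$, so for $i=1,2$ we have $s\cdot(t_i\cdot s)=s$ and $t_i=t_i\cdot(s\cdot t_i)$; applying (Const1) to each of these yields $(s\cdot t_i)\cdot s=s$ and $(t_i\cdot s)\cdot t_i=t_i$. Writing $a_i=s\cdot t_i$ and $b_i=t_i\cdot s$, I would first verify that $a_i,b_i\in E(\Pcon)$: for example $b_i\cdot b_i=b_i\cdot(t_i\cdot s)$ exists by (Const2) from the existence of $b_i\cdot t_i$ and $t_i\cdot s$, and equals $(b_i\cdot t_i)\cdot s=t_i\cdot s=b_i$ by (Const1), with $a_i$ handled symmetrically using $a_i\cdot s=s$.

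The heart of the argument is a collection of contraction identities, all of which collapse at the two-fold level. Using $s\cdot b_2=s\cdot(t_2\cdot s)=s$ together with (Const2) and (Const1), one obtains $b_1\cdot b_2=(t_1\cdot s)\cdot b_2=t_1\cdot(s\cdot b_2)=t_1\cdot s=b_1$, and symmetrically $b_2\cdot b_1=b_2$; dually, from $a_i\cdot s=s$ one gets $a_1\cdot a_2=a_2$ and $a_2\cdot a_1=a_1$. From these it follows immediately that $b_1\cdot(b_2\cdot b_1)=b_1\cdot b_2=b_1$ and $b_2\cdot(b_1\cdot b_2)=b_2\cdot b_1=b_2$, so $b_1,b_2$ are mutually inverse idempotents, and likewise $a_1\cdot(a_2\cdot a_1)=a_1\cdot a_1=a_1$ and $a_2\cdot(a_1\cdot a_2)=a_2\cdot a_2=a_2$, so $a_1,a_2$ are mutually inverse idempotents. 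The hypothesis then forces $t_1\cdot s=b_1=b_2=t_2\cdot s$ and $s\cdot t_1=a_1=a_2=s\cdot t_2$. Substituting these equal values into the defining identity for $t_1$ and re-bracketing by (Const1) gives
\[
t_1=t_1\cdot(s\cdot t_1)=t_1\cdot(s\cdot t_2)=(t_1\cdot s)\cdot t_2=(t_2\cdot s)\cdot t_2=t_2\cdot(s\cdot t_2)=t_2,
\]
which completes the proof of uniqueness.

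The main obstacle I anticipate is not the algebra but the existence bookkeeping: each product written above must be shown to exist before it can be manipulated, so every appeal to (Const1) or (Const2) has to be licensed by a previously established existence. The fortunate feature is that all the relevant products contract at the two-fold level rather than requiring genuine three-fold rearrangements, which keeps the existence arguments short; the care lies simply in recording them in the right order.
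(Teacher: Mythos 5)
Your proof is correct and follows essentially the same route as the paper's: in the converse direction, both arguments take two inverses of $s$, show that the idempotents $s\cdot t_1,s\cdot t_2$ (and $t_1\cdot s,t_2\cdot s$) are mutually inverse so the hypothesis forces them equal, and then finish with the identical chain $t_1=t_1\cdot(s\cdot t_1)=\dots=t_2$. The only difference is that you fill in the paper's ``calculate in a standard way'' step explicitly, via the absorption identities $b_1\cdot b_2=b_1$, $a_1\cdot a_2=a_2$, etc., with the existence bookkeeping correctly licensed by (Const2) before each use of (Const1).
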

\begin{proof}
If $\Pcon$ is inverse, then the given condition on idempotents follows because $e$ is the unique inverse for itself.

Conversely, suppose that $\Pcon$ is regular and the stated condition on idempotents holds.  Suppose that $s\in P$ has both $t,u\in P$ as inverses.  So $s=s\cdot (t\cdot s)$, $t=t\cdot (s\cdot t)$, $s=s\cdot (u\cdot s)$ and $u=u\cdot (s\cdot u)$.  It follows that $s\cdot t, s\cdot u, t\cdot s$ and
$u\cdot s$ are all idempotent. One can then calculate in a standard way that
\[s\cdot u=(s\cdot u)\cdot ((s\cdot t)\cdot (s\cdot u))
\mbox{ and }s\cdot t=(s\cdot t)\cdot ((s\cdot u)\cdot (s\cdot t)),\]
so that we deduce $s\cdot t=s\cdot u$. Similarly, one obtains that
$u\cdot s=t\cdot s$. We then have 
 $$u=u\cdot (s\cdot u)=(u\cdot s)\cdot u=(t\cdot s)\cdot u=t\cdot (s\cdot u)=t\cdot (s\cdot t)=t,$$
giving that $\Pcon $ is inverse.
\end{proof}

If $\Pcon$ is a semigroup, the last result asserts that if $\Pcon$ is regular then it is inverse if and only if whenever $e=efe$ and $f=fef$ for two idempotents $e,f$, it must be that $e=f$,  a condition which is therefore equivalent to the more familiar commuting idempotents characterisation.

We now turn to the relationship between D-inverse constellations and inverse pre-constellations.  They are not the same thing, since inverse semigroups are examples of D-inverse constellations, but two idempotents $e,f$ in an inverse semigroup can be such that $ef\neq e$.  

\begin{pro}  \label{cond12}
Every D-inverse constellation $\Pcon=(P,\cdot\, ,D)$ has an inverse pre-constellation reduct $(P,\cdot)$ in which the inverses of elements are their D-inverses, satisfying the following:
\bi
\item for all $e,f\in E(\Pcon)$, if $e\cdot f$ and $f\cdot e$ exist, then they are equal;
\item if $s\cdot e$ exists for some $s\in P$ and $e\in E(\Pcon)$, then $s\cdot e=s$.
\ei
\end{pro}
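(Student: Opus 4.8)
The goal is Proposition~\ref{cond12}: a $D$-inverse constellation $\Pcon=(P,\cdot\, ,D)$ has a pre-constellation reduct $(P,\cdot)$ that is inverse (with inverses equal to $D$-inverses) and satisfies the two listed properties. I would organise the proof into three parts, corresponding to the three claims: that the reduct is an inverse pre-constellation, and then the two bullet conditions.

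First I would observe that $(P,\cdot)$ is automatically a pre-constellation, since (Const1) and (Const2) hold in any constellation, and these are exactly the axioms for a pre-constellation. The substantive point is that the pre-constellation notion of inverse coincides with the $D$-inverse. Given $s\in P$ with $D$-inverse $s'$, I would verify that $s'$ is a pre-constellation inverse, i.e. $s\cdot(s'\cdot s)=s$ and $s'=s'\cdot(s\cdot s')$. This is precisely the identity noted right after Theorem~\ref{normiso}: from (Const2), $s\cdot(s'\cdot s)=(s\cdot s')\cdot s=D(s)\cdot s=s$, and symmetrically for $s'$. For uniqueness of the pre-constellation inverse, I would use Proposition~\ref{regisinv}: since every element has an inverse the reduct is regular, so by that proposition it suffices to check its idempotent condition, namely that $e\cdot(f\cdot e)=e$ and $f\cdot(e\cdot f)=f$ force $e=f$ for $e,f\in E(\Pcon)$. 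In a $D$-inverse constellation $E(\Pcon)=D(\Pcon)$ by Proposition~\ref{EPinRIP}, and projections satisfy $e\cdot e=e=D(e)$; the hypotheses then give that $e\cdot f$ and $f\cdot e$ both exist, whence $f$ is a $D$-inverse of $e$, so $e=f$ by normality (Proposition~\ref{normunique}). This settles the inverse pre-constellation claim.

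Next, for the first bullet, suppose $e,f\in E(\Pcon)$ with both $e\cdot f$ and $f\cdot e$ existing. Again $E(\Pcon)=D(\Pcon)$, so $e\cdot f=e$ and $f\cdot e=f$ by the defining property of right identities (each projection is a right identity, so any product landing on it returns the left factor). Thus $f$ is a $D$-inverse of $e$ and normality forces $e=f$, so indeed $e\cdot f=e=f=f\cdot e$ and they are equal. For the second bullet, if $s\cdot e$ exists with $e\in E(\Pcon)=D(\Pcon)$, then $e$ is a right identity, so $s\cdot e=s$ directly from (Const3)/the definition of $RI(\Pcon)$.

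The only mild subtlety — and what I would flag as the main thing to get right rather than a genuine obstacle — is the repeated appeal to $E(\Pcon)=D(\Pcon)$. This is not automatic in a general constellation, but it holds here because a $D$-inverse constellation is $D$-regular (every element $s$ satisfies $s\cdot s'=D(s)$), so Proposition~\ref{EPinRIP} applies and gives $E(\Pcon)=RI(\Pcon)=D(\Pcon)$. Once this identification is in hand, both bullet points reduce to the elementary fact that projections are right identities, and uniqueness of inverses reduces cleanly to normality via Propositions~\ref{normunique} and~\ref{regisinv}. I would make sure to invoke $D$-regularity explicitly at the start so that the later reductions to ``projection = right identity'' are justified.
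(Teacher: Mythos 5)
Your proof is correct and follows essentially the same route as the paper's: establish regularity of the reduct via $s\cdot(s'\cdot s)=s$, identify $E(\Pcon)=D(\Pcon)$ via Proposition~\ref{EPinRIP}, verify the idempotent condition of Proposition~\ref{regisinv} using normality of a D-inverse constellation, and read off both bullet points from the fact that projections are right identities. Your explicit flagging of the D-regularity needed to invoke Proposition~\ref{EPinRIP} is a detail the paper leaves implicit, but the argument is the same.
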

\begin{proof}
Suppose that $\Pcon$ is a D-inverse constellation.  For all $s\in P$ we have  $s\cdot (s^{\prime}\cdot s)=s\cdot R(s)=s$, and similarly $s^{\prime}\cdot(s\cdot s^{\prime})=s^{\prime}$, so that $\Pcon$  is regular.

For $e,f\in E(\Pcon)$, from Proposition \ref{EPinRIP} we have that $e,f\in D(\Pcon)$, so if $e\cdot f$ and $f\cdot e$ exist, they are $e,f$ respectively, and hence equal since $\Pcon$ is normal.   Hence for all $e,f\in E(\Pcon)$, if $e=e\cdot (f\cdot e)$ and $f=f\cdot (e\cdot f)$, then in particular $e\cdot f$ and $f\cdot e$ exist, so that $e=f$.  It follows that $\Pcon$ is inverse by Proposition \ref{regisinv}, and so $s^{\prime}$ must be the unique inverse of $s$ for each $s\in P$.  The final point is clear from the definition of a constellation.
\end{proof}

Note that any inverse semigroup is an inverse pre-constellation satisfying the first condition but not the second,  showing that inverse pre-constellations are  more general than D-inverse constellations.
  
Next we show that the two conditions in Proposition \ref{cond12} in fact characterise (reducts of) D-inverse constellations amongst inverse pre-constellations.

\begin{thm}
If $\Pcon$ is a pre-constellation that is inverse and satisfies the two conditions in Proposition \ref{cond12}, then it is a D-inverse  constellation in which the inverse $s^{\prime}$ of $s\in P$ is its D-inverse $($and hence $D(s)=s\cdot s^{\prime}$ for all $s\in P$).
\end{thm}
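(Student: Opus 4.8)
The plan is to verify that $\Pcon$ satisfies (Const3), so that it is a constellation with the domain of $s$ given by $D(s)=s\cdot s'$, where $s'$ is the unique pre-constellation inverse of $s$, and then to check that $\Pcon$ is D-inverse with $s'$ as the D-inverse of $s$. Two preliminary observations I would record first are: (i) by the second condition of Proposition \ref{cond12}, every idempotent of $\Pcon$ is a right identity, so $E(\Pcon)\subseteq RI(\Pcon)$; and (ii) for each $s$, both $s\cdot s'$ and $s'\cdot s$ are idempotent. For (ii), the definition of inverse together with (Const1) gives $s\cdot(s'\cdot s)=(s\cdot s')\cdot s=s$ and $s'\cdot(s\cdot s')=(s'\cdot s)\cdot s'=s'$; a short application of (Const2) and (Const1) then yields $(s\cdot s')\cdot(s\cdot s')=s\cdot(s'\cdot(s\cdot s'))=s\cdot s'$, and symmetrically for $s'\cdot s$. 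I would also note that, since inverses are unique and the defining equations are symmetric in $s$ and $s'$, the inverse of $s'$ is $s$.

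The heart of the argument is (Const3). For existence, set $e=s\cdot s'$: this is an idempotent, hence a right identity by (i), and $e\cdot s=(s\cdot s')\cdot s=s$. For uniqueness, suppose $f$ is any right identity with $f\cdot s=s$. Since $f\cdot s$ and $s\cdot s'$ both exist, (Const2) gives that $f\cdot(s\cdot s')$ exists, and (Const1) gives $f\cdot(s\cdot s')=(f\cdot s)\cdot s'=s\cdot s'$. On the other hand $s\cdot s'$ is itself a right identity, so, as $f\cdot(s\cdot s')$ exists, it equals $f$. Comparing, $f=s\cdot s'$. This establishes (Const3), so $\Pcon$ is a constellation with $D(s)=s\cdot s'$, and $RI(\Pcon)=D(\Pcon)=E(\Pcon)$ using (i) together with the general inclusion $D(\Pcon)=RI(\Pcon)\subseteq E(\Pcon)$.

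It then remains to see that $\Pcon$ is D-inverse with D-inverse $s'$. Since the inverse of $s'$ is $s$, applying the formula for $D$ to $s'$ gives $D(s')=s'\cdot s$, so $s\cdot s'=D(s)$ and $s'\cdot s=D(s')$, exhibiting $s'$ as a D-inverse of $s$; thus every element has a D-inverse. For normality, take $e,f\in D(\Pcon)=E(\Pcon)$ with $e\cdot f$ and $f\cdot e$ both existing: by the second condition of Proposition \ref{cond12} we have $e\cdot f=e$ and $f\cdot e=f$, while the first condition gives $e\cdot f=f\cdot e$, whence $e=f$. By Corollary \ref{corDinv}, $\Pcon$ is D-inverse, and by uniqueness of the D-inverse it coincides with the pre-constellation inverse $s'$, so $D(s)=s\cdot s'$ for all $s$, as claimed.

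I expect the only genuinely delicate step to be the uniqueness half of (Const3): the key trick is to multiply the candidate right identity $f$ against $s\cdot s'$, and to read off the value of that product in two ways---once via associativity as $s\cdot s'$, and once via the right-identity property of $s\cdot s'$ as $f$. Everything else is bookkeeping with (Const1), (Const2), and the two conditions inherited from Proposition \ref{cond12}.
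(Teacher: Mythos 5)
Your proof is correct and follows essentially the same route as the paper's: both define $D(s)=s\cdot s'$, establish (Const3) by computing a candidate identity times $s\cdot s'$ in two ways (via (Const1)/(Const2) it equals $s\cdot s'$; via the second condition of Proposition~\ref{cond12} it equals the candidate itself), and then read off that $s'$ is the D-inverse using $s''=s$. If anything, your write-up is slightly more complete than the paper's, since you explicitly verify normality and invoke Corollary~\ref{corDinv} to get uniqueness of D-inverses, a step the paper's proof leaves implicit.
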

\begin{proof}
Let $\Pcon$ be an inverse pre-constellation.  Define $D(s)=s\cdot s^{\prime}$ for all $s\in P$.  Let $E=\{s\cdot s^{\prime}\mid s\in P\}$.  We have remarked that $E\subseteq E(\Pcon)$.   Since $e^{\prime}=e$ for all $e\in E(\Pcon)$, we see that $E=E(\Pcon)$.  But $D(s)\cdot s=(s\cdot s^{\prime})\cdot s=s$, and if $e\cdot s=s$ for some $e\in E(P)$, then $(e\cdot s)\cdot s^{\prime}=s\cdot s^{\prime}$, so $e=e\cdot D(s)=D(s)$.  So $D(s)$ is the unique $f\in E(\Pcon)=D(\Pcon)$ such that $f\cdot s=s$. We are given that $E(\Pcon)\subseteq RI(\Pcon)$. If $s\in RI(\Pcon)$ then as $s=D(s)\cdot s$, we have that $s=D(s)$ so that $E(\Pcon)=D(\Pcon)=RI(\Pcon)$. Consequently,  $(P,\cdot\, ,D)$ is a constellation. Moreover, it is D-inverse since  $D(s)=s\cdot s^{\prime}$ for all $s\in P$, hence also $D(s^{\prime})=s^{\prime}\cdot s$ for all $s\in P$, since $s^{\prime\prime}=s$.
\end{proof}

\section*{Acknowledgement} The authors are grateful to a careful referee, whose remarks have helped improve our paper.

\end{document}